\documentclass[a4paper,11pt]{amsart} %{article}
\usepackage[left=3.3cm, right=3.3cm]{geometry}
\usepackage[T1]{fontenc} 
\usepackage{mathpazo}
\usepackage{euler}

\usepackage{setspace}
\usepackage[dvipsnames]{xcolor}

\usepackage{tikz}
\usepackage{tikz-cd}

\usepackage{mathtools}

\usepackage{mathrsfs}
\usepackage{graphicx}
\usepackage{enumerate} 
\usepackage{cancel}
\usepackage{latexsym}
\usepackage{amssymb,hyperref}
\hypersetup{colorlinks,linkcolor={RoyalBlue},citecolor={Orchid},urlcolor={OliveGreen}}
\usepackage[cp850]{inputenc}
\usepackage{epsfig}
\usepackage{amsthm}
\usepackage{amscd}
\usepackage{amsmath}
\usepackage{amsfonts}
\usepackage{graphics}
\newtheorem{theorem}{Theorem}[section]

\newtheorem{lemma}[theorem]{Lemma}
\newtheorem{corollary}[theorem]{Corollary} 
\newtheorem{prop}[theorem]{Proposition}
\newtheorem{notation}[theorem]{Notation}

\newtheorem*{theorem*}{Theorem} 
\newtheorem*{corollary*}{Corollary}
\theoremstyle{definition}
\newtheorem{example}[theorem]{Example}
\newtheorem{remark}[theorem]{Remark}
\newtheorem{definition}[theorem]{Definition}
\newtheorem*{remark*}{Remark}

\newtheorem*{definition*}{Definition}
\newtheorem*{example*}{Example}

\newtheoremstyle{named}{}{}{\itshape}{}{\bfseries}{.}{.5em}{\thmnote{#3}}
\theoremstyle{named}

\newtheoremstyle{named}{}{}{\itshape}{}{\bfseries}{.}{.5em}{\thmnote{#3}}
\theoremstyle{named}

 \newcommand{\BG}{\mathbb G}

\newcommand\smvee{\raise0.3ex\hbox{$\scriptscriptstyle\vee$}}

\DeclareMathOperator{\Spec}{Spec}

\newcommand{\comment}[1]{}

\DeclareSymbolFont{greekletters}{OML}{cmr}{m}{it}
\DeclareMathSymbol{\vsigma}{\mathalpha}{greekletters}{"26}

\title{Log purity, torsors on root stacks and log Nori fundamental group}

\author{Sara Mehidi}
\address{Mathematical Institute, Utrecht University, Hans Freudenthalgebouw, Budapestlaan 6, 3584 CD, Utrecht, Netherlands}
\email{s.mehidi@uu.nl}

\begin{document}

 \begin{abstract}   We generalize the logarithmic purity theorem of \cite{FK,Mochizuki} to torsors which arise in the Kummer log flat topology under finite flat linearly reductive group schemes. We then give a stack-theoretic interpretation of our purity theorem via root stacks, relating it to the valuative criterion of properness for tame algebraic stacks \cite{root}. Finally, we construct a logarithmic Nori fundamental group scheme of a log regular log scheme classifying such torsors, and compare it with the classical Nori fundamental group and the tame fundamental group.
\end{abstract}
\maketitle
\setcounter{tocdepth}{2}
\tableofcontents
\section{Introduction}
\addtocontents{toc}{\protect\setcounter{tocdepth}{1}}
The \textit{Purity Theorem} of Zariski--Nagata asserts that a finite flat morphism $f \colon Y \to X$ onto a regular scheme, which is \'etale over a dense open subset $U \subseteq X$ containing all codimension one points of $X$, is in fact \'etale everywhere. A similar statement in positive characteristic was claimed and proved in \cite{moreb,marrama}, where finite \'etale covers are replaced by torsors under finite flat group schemes (not necessarily \'etale). More precisely, they proved that the restriction functor induces an equivalence between the category of fppf torsors on $X$ and the category of fppf torsors on $U$.\\

However, in many natural situations, one is led to consider a scheme $X$ (possibly singular) together with a dense open subset $U \subseteq X$ whose complement is a divisor, or more generally contains codimension one points. Typical examples include:
\begin{enumerate}
\item $U$ a quasi-projective variety and $X$ a toroidal or simple normal crossings compactification,
\item $U$ a family of smooth curves over some base, degenerating to a family of nodal curves $X$.
\end{enumerate}

The classical purity theorem does not capture situations involving ramification along a boundary divisor. Logarithmic geometry, a variant of algebraic geometry introduced by Fontaine-Illusie in the 1980s, and further developed by Kato and many others, provides a more flexible framework for studying such phenomena. In particular, it allows one to reinterpret tame ramification as \'etaleness in the log setting.

Indeed, in characteristic zero, \cite{FK,Mochizuki} established a logarithmic analogue of the purity theorem (cf. Theorem \ref{logetpur}). More precisely, they proved that finite \'etale covers of $U$ that are tamely ramified along the boundary extend uniquely to Kummer log \'etale covers of $X$.\\

The first result of this paper extends the logarithmic purity theorem to torsors under finite flat linearly reductive group schemes. In positive characteristic, such torsors provide a natural enlargement of the class of tamely ramified covers (cf. \cite{AbrOV}) and account for ramification phenomena that are invisible to the logarithmic \'etale topology.

\begin{theorem}[Theorem~\ref{lin-red}]\label{A}
Let $X$ be a log regular log scheme, and let $U \subseteq X$ be the dense open locus where the log structure is trivial. Let $G$ be a finite flat linearly reductive group scheme over $X$. Then the restriction functor
\[
\mathrm{Tor}_{\mathrm{klf}}(X,G) \longrightarrow \mathrm{Tor}_{\mathrm{fppf}}(U,G)
\]
is an equivalence of categories.
\end{theorem}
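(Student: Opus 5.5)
The plan is to reduce the statement to the case where $X$ is strictly local and then use the structure theory of finite flat linearly reductive group schemes together with the known purity results. Both sides of the restriction functor are stacks: $\mathrm{Tor}_{\mathrm{klf}}(-,G)$ for the Kummer log flat topology on $X$, and $\mathrm{Tor}_{\mathrm{fppf}}(-\cap U,G)$ for the fppf topology on $U$, pushed forward to $X$. It is therefore enough to prove full faithfulness and essential surjectivity \'etale locally on $X$. Full faithfulness reduces to injectivity plus the same statement for twisted forms. Concretely, for two klf torsors $P,Q$ the sheaf $\mathrm{Isom}(P,Q)$ is representable by a klf torsor under an inner form of $G$, which is again finite flat linearly reductive. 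So once we know that $\Gamma(X,T)\to\Gamma(U,T)$ is bijective for klf torsors $T$ under such groups, full faithfulness follows. I would prove that bijectivity first, using that $X$ is normal (log regular implies normal, by Kato) and $U$ is dense. For injectivity, two sections agreeing on $U$ agree after a Kummer flat cover, which is finite over a normal log regular scheme. For surjectivity, a section over $U$ extends to the normalization of $X$ in $T|_U$, and that normalization is a Kummer cover of $X$ by the essential surjectivity argument below.

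For essential surjectivity, work \'etale locally at a geometric point $\bar x$ of $X$. By the Abramovich--Olsson--Vistoli structure theorem, after an fppf (in fact \'etale, since the residue field is separably closed) localization, $G$ becomes $\Delta\rtimes H$, where $\Delta$ is diagonalizable and $H$ is a constant tame \'etale group of order prime to $p$. Note that linear reductivity forces $G^0$ to be of multiplicative type. Let $P_U$ be a $G$-torsor on $U$. Its quotient $P_U/\Delta$ is an $H$-torsor, i.e.\ a finite \'etale cover of $U$ of degree prime to $p$. Such a cover is tamely ramified along $X\setminus U$. By the log purity theorem of Fujiwara--Kato--Mochizuki (Theorem~\ref{logetpur}, valid for tame covers in any characteristic), it extends uniquely to a Kummer log \'etale $H$-torsor on $X$. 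After passing to a Kummer \'etale cover $X'\to X$ trivializing it (again log regular), we reduce to a $\Delta$-torsor on $U'$. Splitting $\Delta\cong\prod\mu_{n_i}$ (plus constant parts, already handled), we reduce to $G=\mu_n$.

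For $\mu_n$, Kummer theory gives a description. A $\mu_n$-torsor on $U$ is a pair $(L,\phi\colon L^{\otimes n}\cong\mathcal O_U)$. Since $X$ is strictly local and normal, and Kato's theory gives $\mathrm{Pic}(U)\cong \overline{M}^{gp}_{\bar x}\otimes\mathbb Q$-type data (more precisely, the divisor class group of a log regular strictly local ring is generated by boundary divisors, i.e.\ by $\overline M^{gp}$), the torsor comes from a unit $u\in\Gamma(U,\mathcal O^*)=M^{gp}_{X,\bar x}$ modulo $n$-th powers. In the klf topology on $X$ one has the exact sequence $1\to\mu_n\to M^{gp}\xrightarrow{n}M^{gp}\to1$ (Kato's Kummer sequence). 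Hence $u$ defines a klf $\mu_n$-torsor on $X$, namely $\sqrt[n]{u}$, which restricts to the given one. A descent argument then gives the equivalence over $X$ via the stacks property, using full faithfulness to glue the local extensions.

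I expect the main obstacle to be the Picard and units computation for $U$ in the strictly local log regular case. We need the identifications $\mathcal O^*(U)/\mathcal O^*(X)\cong \overline M^{gp}$ and $\mathrm{Pic}(U)$ torsion coming from $\overline M^{gp}$. I would try Kato's result that a log regular local ring is a toric-type ring, so its class group is $\overline M^{gp}/\text{(saturation)}$. I would also verify that the dévissage through $\Delta\rtimes H$ is compatible with descent from the Kummer cover $X'$.
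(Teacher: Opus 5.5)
Your overall route works, with one sub-step of the full-faithfulness argument that must be replaced. The skeleton matches the paper's proof of Theorem~\ref{lin-red}:
\begin{enumerate}
\item reduction to $X$ strictly local;
\item the extension $1\to\Delta\to G\to H\to 1$;
\item extension of $P_U/\Delta$ by log purity to a Kummer \'etale $H$-torsor $T\to X$;
\item extension of the $\Delta$-torsor $P_U\to P_U/\Delta$ over the log regular $T$;
\item full faithfulness over $T\times_X T\simeq H\times T$.
\end{enumerate}

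The difference is in how the pieces are reassembled. The paper invokes Olsson's equivalence (Theorem~\ref{Ols}) and extends the datum $\chi_Z$. You instead induce $Q\times^{\Delta}G$ on $X'=T$ and descend along the Kummer \'etale cover $T\to X$. Your $Q$ is the paper's $Y$, and your descent isomorphism plays the role of $\chi_Y$. Your cocycle check over $T\times_X T\times_X T$, done by faithfulness, corresponds to condition (4), which the paper leaves implicit.

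What your route buys: independence from Olsson's theorem, and a purely local $\mu_n$ step. That step needs only Kato's Kummer sequence and $\mathrm{Pic}(U)=0$ for strictly local $X$, i.e.\ $R^1j_*\mathbb{G}_m=0$, the same input as Lemma~\ref{Gmlog}. It does not need the global comparison of $H^1(X,\mathbb{G}_{m,\log})$ with $H^1(U,\mathbb{G}_m)$ used in Lemma~\ref{mu_n}. Drop the phrase ``$\mathrm{Pic}(U)\cong\overline{M}^{gp}\otimes\mathbb{Q}$-type data'', which is wrong. What is true and needed is $\mathrm{Pic}(U)=\mathrm{Cl}(U)=\mathrm{Cl}(X)/\langle\text{boundary divisors}\rangle=0$, which your ``more precisely'' clause does give.

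The step to repair is surjectivity of $\Gamma(X,T)\to\Gamma(U,T)$. The normalization of $X$ in $T|_U$ is in general not a Kummer cover in characteristic $p$. Take $G=\mu_p$ and $T$ the trivial torsor over a discrete valuation ring whose closed point is the boundary. Then $T_U$ is non-reduced, and the integral closure of $\mathcal{O}_X$ in $\mathcal{O}(T_U)$ contains the whole nilradical, so it is not even finite over $X$. Justifying this step ``by the essential surjectivity argument below'' is also circular, since your descent and \'etale gluing use full faithfulness.

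The fix is the paper's argument for $G$ (normality plus finiteness, EGA~II), run inside $T$ itself:
\begin{itemize}
\item $\underline{T}$ is finite over $\underline{X}$, so the schematic closure of the image of a section over $U$ is finite and birational over the normal scheme $\underline{X}$. Hence it maps isomorphically onto $\underline{X}$.
\item The resulting morphism $\underline{X}\to\underline{T}$ is automatically, and uniquely, a morphism of log schemes over $X$. This is because $\alpha_X$ identifies $M_X$ with $\mathcal{O}_X\cap j_*\mathcal{O}_U^\times$ (Kato, Theorem 11.6), and $T_U$ carries the trivial log structure.
\item Injectivity of $\alpha_X$, together with separatedness of $\underline{T}$ and density of $U$, also gives injectivity directly, without passing to any cover.
\end{itemize}

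Finally, the inner twist $\mathrm{Aut}(P)$ by a non-classical klf torsor need not be strict over $X$. So it is not a finite flat linearly reductive group scheme over $X$ in the sense used here. This does not matter for your argument: you only need $\mathrm{Isom}(P,Q)$ to be represented by a log scheme finite over $\underline{X}$ with trivial log structure over $U$.
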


In examples (a) and (b) above, the base $X$ admits a natural logarithmic structure making it a log regular log scheme for which Theorem \ref{A} applies.\\

The problem of extending torsors in the logarithmic setting has been studied in \cite{Sara}, \cite{Saraa}, and \cite{saraT}, specifically for torsors on curves. In the latter reference it is shown, using tropical geometry, that when $X \to S$ is a family of nodal curves endowed with a logarithmic structure making it log smooth, every fppf $G$-torsor on the smooth locus of $X$ extends uniquely to a logarithmic $G$-torsor on $X$ when $G$ is diagonalizable. In Corollary~\ref{logcurve}, we extend this result to the case where $G$ is linearly reductive.\\

The main ingredient in the proof of Theorem \ref{A} is a structural description of logarithmic torsors under finite flat linearly reductive group schemes. Namely, such torsors can be decomposed into the composition of an \'etale torsor and a torsor under a diagonalizable group scheme. We then prove that each of these components extends separately in the logarithmic setting, and that these extensions combine to yield an extension of the original torsor.

\subsection{Log torsors and root stacks}

The relationship between logarithmic geometry and its stack-theoretic incarnation has been studied in \cite{BorneVis,TalpoV,Olssonn}. A fundamental observation due to Olsson is that Kummer log \'etale morphisms over a base become \'etale after passing to an appropriate root stack of the base. The notion of root stack used here is that of a stack associated to a logarithmic scheme obtained by modifying the monoids of its log structure, as introduced in \cite{BorneVis}. When the logarithmic structure is induced by an irreducible effective Cartier divisor, this construction coincides with the classical root stacks of \cite{Cadman}.\\

More generally, the infinite root stack of a log scheme was introduced in \cite{TalpoV} as a way to encode the full logarithmic structure by simultaneously taking all finite-level root stacks. One of the main results of \cite{TalpoV} establishes an equivalence
\[\mathrm{Sh(X_{klf})} \equiv \mathrm{Sh}((\sqrt[\infty]{X})_{fppf})\]
between the Kummer flat topos of a logarithmic scheme $X$ and the fppf topos of its infinite root stack $\sqrt[\infty]{X}:=\varprojlim\limits_{\substack{n}}\sqrt[n]{X}$.\\
From this, it is deduced in \cite{Talpp} that if $X$ is a $k$-scheme and  $\sqrt[\infty]{X}' := \varprojlim\limits_{\substack{n \\ \mathrm{char}(k)\nmid n}}\sqrt[n]{X}$, one has
\[
\mathrm{fk\acute{e}t}(X)
\equiv
\mathrm{f\acute{e}t}(\sqrt[\infty]{X}')
\]
where the category on the left is that of finite Kummer log \'etale covers, and the one on the right is that of finite \'etale covers.\\

In this paper, we extend this equivalence to the positive characteristic, i.e. taking into account all finite flat torsors beyond the unramified ones. We prove:

\begin{theorem}(Theorem \ref{equi})\label{B}
Let $X$ be a log scheme, and let $G$ be a finite flat group scheme over $X$. Then there is an equivalence of categories
\[
\mathrm{Tor}_{klf}(X,G) \equiv \mathrm{Tor}_{fppf}(\sqrt[\infty]{X},G_{}).
\]
%where $\sqrt[\infty]{X}' := \varinjlim\limits_{\substack{n}}\sqrt[n]{X}$ is the infinite root stack of \cite{TalpoV}. 
\end{theorem}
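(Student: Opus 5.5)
The idea is to deduce the statement from the topos equivalence of \cite{TalpoV},
\[
\Phi\colon \mathrm{Sh}(X_{klf}) \xrightarrow{\ \sim\ } \mathrm{Sh}((\sqrt[\infty]{X})_{fppf}),
\]
by identifying on both sides the torsors as sheaves with extra structure, and checking that $\Phi$ matches these structures. A $G$-torsor for a topology is a sheaf $P$ with a right action of the sheaf of groups represented by $G$, such that $P\times G\to P\times P$ is an isomorphism and $P$ is locally nonempty. All of these notions are expressible purely in topos-theoretic terms: finite products, group objects, isomorphisms, and local epimorphism $P\to *$. An equivalence of topoi preserves finite limits and epimorphisms, so it will induce an equivalence between $G'$-torsors on the left and $\Phi(G')$-torsors on the right for any group object $G'$.

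The first step is to check that $\Phi$ sends the group object given by $G$ to the one given by the pullback $G_{\sqrt[\infty]{X}}$. On the left, the object is the sheaf on $X_{klf}$ represented by $G$. This requires $G$ to be representable by a klf sheaf: a finite flat group scheme is fppf-representable, and by Kato's theorem (Kato/Niziol) such representable functors remain sheaves for the Kummer log flat topology. I would then recall how $\Phi$ is built in \cite{TalpoV}. A Kummer log flat $T\to X$ gives an object of the root stack site, and conversely every fppf object over $\sqrt[n]{X}$ is covered by objects coming from Kummer covers. Under this dictionary, a sheaf on either side coming from a scheme $Y$ over $X$ is $T\mapsto \mathrm{Hom}_X(T,Y)$. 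The compatibility should therefore reduce to checking on a basis of the site, on objects $T\to X$, that $\mathrm{Hom}_X(\underline T,G)$ agrees on both sides. This holds because $G$ has trivial log structure pulled back from $X$, and maps into it only see the underlying scheme.

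The second step is to verify that the definitions of torsor match. On the root stack side, a torsor is a representable fppf $G$-torsor over the stack. Since $G$ is finite flat, hence affine, every fppf sheaf torsor is automatically representable by descent of affine morphisms. This descent has to be done along $\sqrt[\infty]{X}$, which is a limit of algebraic stacks and so not algebraic. I would handle it by writing torsors on $\sqrt[\infty]{X}$ as the 2-colimit of torsors on the $\sqrt[n]{X}$; finite presentation of $G$ and limit arguments give this. Likewise, klf torsors as defined in the paper should coincide with sheaf torsors, by the representability results of Kato for finite flat group schemes.

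The main obstacle is the first step: proving rigorously that $\Phi(G)\cong G_{\sqrt[\infty]{X}}$ as group sheaves. This needs the explicit description of the comparison morphism in \cite{TalpoV}, which goes through a site of Kummer flat maps that become fppf over root stacks. I would first reduce, étale locally on $X$, to the case where $X$ has a global chart $P\to M_X$ with $P$ fine saturated. There the root stacks are the quotients $[X\times_{\mathbb{Z}[P]}\mathbb{Z}[P^{1/n}]/\mu_n(P)]$, and the identification can be checked directly. Finally, these local identifications glue because they are canonical.
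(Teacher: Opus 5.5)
Your strategy matches the paper's proof: transport torsors along the Talpo--Vistoli topos equivalence, use that torsor data is expressed by finite limits and epimorphisms, and then check representability on both sides. On the log side, this uses the representability of klf torsors under finite flat groups, which the paper cites as \cite[Theorem 2.1]{GillibertTame}. Two adjustments are needed.

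First, the step you flag as the main obstacle is formal and needs no chart computation or gluing. The equivalence of \cite[Theorem 6.16]{TalpoV} is induced by the functor of sites $T\mapsto\sqrt[\infty]{T}$, so it sends the sheaf represented by $T$ to the sheaf represented by $\sqrt[\infty]{T}$. Since $G\to X$ is strict, $\sqrt[\infty]{G}=G\times_X\sqrt[\infty]{X}$; this is what the paper uses when it writes $G_\infty=F_*G=G\times_X\sqrt[\infty]{X}$. Note that $G$ carries the log structure pulled back from $X$, not the trivial one as you wrote; strictness is the property you actually need.

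Second, your treatment of representability on the stack side takes a detour that does not work in the stated generality. Describing torsors on $\sqrt[\infty]{X}$ as a $2$-colimit of torsors on the $\sqrt[n]{X}$ by ``finite presentation and limit arguments'' requires $X$ to be quasi-compact and quasi-separated. That is exactly the extra hypothesis of the proposition stated right after Theorem~\ref{equi}, whereas the theorem itself is for an arbitrary log scheme.

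The detour is also unnecessary, because your worry that descent must be done along the non-algebraic stack $\sqrt[\infty]{X}$ is misplaced. Representability of $\mathcal P\to\sqrt[\infty]{X}$ is by definition a statement about the base changes $\mathcal P\times_{\sqrt[\infty]{X}}U$ for schemes $U\to\sqrt[\infty]{X}$. Each of these is an fppf $G_U$-torsor over a scheme, hence representable by descent of affine morphisms; equivalently, $X\to BG$ is representable and affine. This is precisely the argument the paper gives.
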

\hfill

Since $G$ is finite, every such torsor descends to a finite-level root stack. Consequently, the problem of extending an fppf torsor on the trivial locus of a logarithmic scheme admits a natural stack-theoretic interpretation: it is equivalent to extending it to some root stack of the base. \\

Combined with Theorem \ref{A}, this comparison theorem allows us to observe an interesting connection with the valuative criterion of properness for proper tame algebraic stacks \cite{root}. Indeed, proper tame algebraic stacks are \'etale-locally quotient stacks of the form $[Y/G]$, where $G$ is a finite flat linearly reductive group scheme. In particular, the classifying stack $BG$ may be viewed as the basic local model for such stacks. This point of view leads to the following corollary.

\begin{corollary}(Corollary \ref{cor})
Let $X$ be either 
\begin{enumerate}
    \item a toric variety with dense torus $U$ and toric divisor $D$,
    \item a nodal proper curve with $U$ the smooth locus and $D:=X\backslash U$.
\end{enumerate}
Let $G$ be a finite flat linearly reductive group scheme over $X$. Then any rational map \begin{tikzcd}[column sep=0.8em]
X \arrow[dashed]{r} & BG
\end{tikzcd} extends (in a unique way) to a morphism $\sqrt[n]{X} \to BG$ for some integer $n$. In addition, if the underlying scheme of $X$ is regular and $D$ is a strict normal crossing divisor, then for every point $x \in D$, the induced fppf $G$-torsor on $U$ specializes to an fppf $G$-torsor on $\Spec k(x)$.
\end{corollary}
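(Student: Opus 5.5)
The plan is to combine Theorem \ref{A} with Theorem \ref{B}, after equipping $X$ with a log regular log structure. In case (1), $X$ is a toric variety and we take the standard toric log structure given by the divisor $D$. This makes $X$ log regular (Kato), with trivial locus exactly the torus $U$. In case (2), a nodal curve, we must choose a log structure whose trivial locus is $U$ and which makes $X$ log regular. Étale locally at a node, $X$ looks like $\Spec k[x,y]/(xy)$. This is the toric variety of the monoid $\mathbb N^2$ modulo the relation ... Not toric itself, so instead I would use the log structure from the normalization, or the divisorial log structure $\mathcal O_X\cap j_*\mathcal O_U^\times$, and check log regularity directly. This verification is where I expect the main obstacle. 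Log regularity requires $\mathcal O_{X,x}/I(x)$ to be regular of dimension $\dim\mathcal O_{X,x}-\mathrm{rank}\,\overline M^{gp}_x$. For a node on a curve over a field, the point $x$ is not in $U$, while both branches are. So the expected local log structure has $\overline M_x$ of rank one, generated by $x+y$ (with $xy=0$)? That is not log regular, since $\mathcal O/(x+y)=k[x]/(x^2)$ is not regular. I would therefore interpret $U$ in (2) as the complement of the node \emph{together with} the smooth marked points, or accept the curve as a log curve over a log point and pass to the log regular total family as in Corollary~\ref{logcurve}. Concretely, I would appeal to Corollary~\ref{logcurve} for the extension statement in case (2) rather than Theorem \ref{A} directly.

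Once log regularity (or Corollary~\ref{logcurve}) is available, the rational map $X\dashrightarrow BG$ is an fppf $G$-torsor on $U$. Theorem \ref{A} (resp. Corollary \ref{logcurve}) extends it uniquely to a klf $G$-torsor on $X$. Theorem \ref{B} turns this into an fppf torsor on $\sqrt[\infty]X=\varprojlim\sqrt[n]X$. Since $G$ is finite and of finite presentation, the torsor, being a morphism to the algebraic stack $BG$ locally of finite presentation, descends to some finite level $\sqrt[n]X$. To see this, work on a quasi-compact $X$ and use the standard limit argument for morphisms from an inverse limit of stacks with affine transition maps. This gives $\sqrt[n]X\to BG$. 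Uniqueness follows from the full faithfulness in Theorem \ref{A} combined with Theorem \ref{B}, together with the fact that $\sqrt[n]X\to\sqrt[\infty]X$ compatibility pins down the descended map up to unique isomorphism after possibly enlarging $n$.

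For the specialization statement, assume $X$ is regular and $D$ is snc, and let $x\in D$. Locally $\sqrt[n]X$ is $[\,\Spec \mathcal O_X[t_1,\dots,t_r]/(t_i^n-f_i)\,/\,\mu_n^r]$, and over $x$ its residual gerbe is $B\mu_{n,k(x)}^r$. This gerbe has a canonical section $\Spec k(x)\to\sqrt[n]X$ lying over $x$, namely the point $t_i=0$, which makes sense because $f_i(x)=0$. Composing $\Spec k(x)\to\sqrt[n]X\to BG$ yields an fppf $G$-torsor on $\Spec k(x)$, and this is the desired specialization. Here one just checks that the section $t=0$ exists over the residue field, which is routine in the snc setting.
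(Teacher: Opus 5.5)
\textbf{The gap is in case (2).} You are right that Theorem \ref{A} cannot be applied to a nodal curve, and the obstruction is stronger than your computation shows. No fs log structure makes a nodal curve log regular, because log regular schemes are normal; this is the same fact invoked in the proof of Theorem \ref{lin-red}. Moreover, a nodal curve can only be log smooth over a \emph{nontrivial} log point. The pulled-back log structure is then nontrivial at every point, so its trivial locus is empty rather than the smooth locus. Consequently the paper's one-line justification that ``in both cases $X$ is log regular'' does not hold for (2). (Your $x+y$ test is not the relevant one: the divisorial structure $\mathcal O_X\cap j_*\mathcal O_U^\times$ is not even fine at a node.)

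Your substitute does not close this gap. Corollary \ref{logcurve} assumes a log regular base $S$. It compares klf torsors on the total space with fppf torsors on the preimage of the trivial locus $U_S\subset S$, and only modulo torsors pulled back from $S$. For a nodal curve over a field the base is a log point, which is not log regular and whose trivial locus is empty. So the corollary says nothing about torsors on the smooth locus of $X$. Adding marked points to $U$, or passing to a total family, changes the statement and leaves the non-normal node untouched. Case (2) as stated is therefore not proved by your proposal, nor by the paper's argument. What your method does prove is the variant in which $X$ is the total space of a proper log curve over a noetherian log regular base and $U$ is its trivial locus. There $X$ is log regular (as in the proof of Corollary \ref{logcurve}), and the case (1) argument applies verbatim.

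\textbf{Case (1) and the specialization claim follow the paper.} The chain is the same: toric log structure, Theorem \ref{A}, Theorem \ref{B}, then descent to a finite level. Your point $t_i=0$ on the chart is the explicit form of the paper's observation that $\overline{\mathcal M}_{X|\Spec k(x)}\to k(x)$ is zero on nonzero elements and hence lifts to $\tfrac1n\overline{\mathcal M}_{X|\Spec k(x)}$.

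One correction in the limit step. The transition maps $\sqrt[mn]{X}\to\sqrt[n]{X}$ are not affine, and not even representable: on stabilizers they are $\mu_{mn}\to\mu_n$, $\zeta\mapsto\zeta^m$. So a limit theorem for towers with affine transition maps does not apply directly. Instead, run it on the local quotient presentations $[X_n/\mu_n(P)]$ of Remark \ref{niel}. There the atlases $X_n$ and the groupoids $\mu_n(P)\times X_n$ do form towers with finite, hence affine, transition maps. Descend the torsor together with its equivariance data to a finite level, then glue using quasi-compactness of $X$.
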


\subsection{Logarithmic Nori fundamental group}
Grothendieck's classical \'etale fundamental group has been extended in two different directions in order to capture broader classes of coverings. In positive characteristic, the Nori fundamental group scheme \cite{Nori} refines Grothendieck's construction by classifying torsors under finite flat group schemes, including infinitesimal ones. On the other hand, the logarithmic \'etale fundamental group \cite{FK,illusie2002overview} provides a logarithmic analogue of the \'etale fundamental group and, under suitable regularity assumptions, recovers the tame fundamental group of Grothendieck-Murre \cite{tame-fundamental}. Consequently, it classifies tamely ramified covers in the classical sense, namely finite generically \'etale covers with tame ramification along the boundary.\\

In positive characteristic, however, the log \'etale fundamental group does not capture all tame phenomena: it misses torsors under finite linearly reductive group \linebreak  schemes, whose ramification behavior should still be regarded as tame in the sense of \cite{AbrOV}. As another application of Theorem \ref{A}, we construct a logarithmic Nori fundamental group which classifies precisely those logarithmic torsors. Our construction is the logarithmic version of the maximal linearly reductive quotient of the classical Nori fundamental group scheme studied in \cite{Otabe}. Let $k$ be a field of characteristic $p > 0$. We have:

%and let $X$ be a log regular log scheme over $k$ with integral underlying scheme. Fix a $k$-log point $x_0 \to X$, i.e. the underlying scheme of $x_0$ is $\Spec k$ and it is endowed with a log structure inducing a morphism of log schemes $x_0 \to X$. We consider the category of Kummer log flat torsors over $X$ pointed over $x_0$ and under finite linearly reductive $k$-group schemes. Using Theorem~\ref{A}, we show that this category is cofiltered and admits a universal pro-object. This leads to:

\medskip

\begin{theorem} (Theorem~\ref{pi} and Corollary \ref{U})
Let $X$ be a log regular $k$-log scheme with integral underlying scheme and let $x_0$ be a $k$-log point. Then there exists a profinite pro-linearly reductive $k$-group scheme
\[
\pi^{N,\log}_{1}(X,x_0)
\]
classifying Kummer log flat torsors over $X$ (pointed over $x_0$) under finite linearly reductive group schemes. In addition, if $x_0$ has the trivial log structure, then
\[\pi^{N, \log}_{1}(X,x_0) \simeq \pi^{N}_{1}(U,x_0)\]
where $\pi_1^N$ denotes the maximal pro-linearly reductive quotient of the (classical) Nori fundamental group on $U$.
\end{theorem}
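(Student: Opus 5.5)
The plan is to build $\pi^{N,\log}_1(X,x_0)$ with Nori's Tannakian-free method, as a limit over a cofiltered category of pointed torsors, and then to read off the comparison with $U$ from Theorem~\ref{A}. Let $\mathcal{C}$ be the category of triples $(G,P,p)$. Here $G$ is a finite linearly reductive $k$-group scheme, $P\to X$ is a Kummer log flat $G_X$-torsor, and $p\in P(x_0)$ is a point over $x_0$. A morphism is a homomorphism $G\to G'$ together with a compatible $G$-equivariant map $P\to P'$ that respects the points.

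The first step is to show that $\mathcal{C}$ is cofiltered. This requires three facts.
\begin{enumerate}
\item Hom sets are small, and between two objects there is at most one morphism. This rigidity should come from $X$ being integral and connected: I would pass to $U$ using Theorem~\ref{A}, which is fully faithful, and there apply Nori's rigidity argument for reduced connected schemes.
\item Fibre products exist. Given $(G_1,P_1,p_1)$ and $(G_2,P_2,p_2)$, the product $P_1\times_X P_2$ is a $G_1\times G_2$-torsor in the klf topology with point $(p_1,p_2)$. The group $G_1\times G_2$ is still linearly reductive.
\item Two parallel morphisms can be equalized. One takes the subgroup scheme where the two homomorphisms agree and the corresponding reduction of the torsor.
\end{enumerate}
The delicate point in (3) is that a closed subgroup scheme of a linearly reductive group scheme is again linearly reductive in characteristic $p$. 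This holds because such groups are extensions of étale tame groups by diagonalizable ones, and the class is closed under subgroups and quotients. I would also use images, replacing $G$ by the image of $\pi\to G$, to obtain the reduced objects needed for a universal pro-object. I then set $\pi^{N,\log}_1(X,x_0)=\varprojlim G$ over $\mathcal{C}$, with universal torsor $\varprojlim P$. The universal property follows from the cofiltered structure exactly as in Nori's Chapter II, with fppf replaced by klf. Gluing and descent for klf torsors is available through the topos equivalence behind Theorem~\ref{B}.

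For the comparison statement, suppose $x_0$ has trivial log structure, so $x_0\to X$ factors through $U$. Theorem~\ref{A} gives, for each linearly reductive $G$, an equivalence $\mathrm{Tor}_{klf}(X,G)\simeq\mathrm{Tor}_{fppf}(U,G)$. This equivalence is compatible with base points, since the fibre over $x_0$ is computed on $U$. It is also functorial in $G$ and compatible with products and reductions, because restriction commutes with change of group. Restriction therefore induces an equivalence between $\mathcal{C}$ and the analogous category of pointed fppf torsors on $U$ under finite linearly reductive groups. The limit over the latter category is by definition, following \cite{Otabe}, the maximal pro-linearly reductive quotient $\pi_1^N(U,x_0)$. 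Equivalent cofiltered index categories give isomorphic limits.

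I expect the main obstacle to be the existence of the universal object, in particular the rigidity of morphisms in $\mathcal{C}$ and the closure of the category under the constructions used in (2) and (3). Doing this directly in the klf topology is awkward. I would first try to transport everything to $U$ through Theorem~\ref{A}, or to a finite root stack through Theorem~\ref{B}, where classical fppf arguments apply. The remaining care is needed when $x_0$ carries a nontrivial log structure, since there the fibre functor does not factor through $U$.
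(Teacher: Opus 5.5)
Your architecture matches the paper's: a cofiltered category of pointed torsors, a limit over it, then comparison with $U$ through Theorem~\ref{A}. Your comparison argument for $x_0$ with trivial log structure is essentially Corollary~\ref{U}. In that case, transporting the whole pointed category to $U$ even gives existence directly from Nori and Otabe, since $U$ is reduced and connected with $x_0\in U(k)$. The statement, however, allows an arbitrary $k$-log point $x_0$. There your item (3) carries all the weight and is not proved.

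\textbf{The gap in item (3).} ``The corresponding reduction of the torsor'' presupposes that the equalizer is a klf torsor over \emph{all} of $X$ under $G_1\times_{G_0}G_2$ and still carries the base point. Equivalently, the fs fibre product $T_1\times_{T_0}T_2$, which is what the paper uses, must have these properties. That is the content of Lemma~\ref{prod}. The closure of linearly reductive groups under closed subgroups, which you call the delicate point, is only \cite[Proposition 2.7]{AbrOV}. Your fallback of transporting to $U$ needs two things you do not supply when $x_0\notin U$.
\begin{enumerate}
\item[(a)] \emph{The reduction must be nonempty over $U$, although the only point you know on it lies on the boundary.} The paper argues as follows. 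The morphism $T_1\times_X T_2\to X$ is finite, hence closed, and open by \cite[Proposition 2.5]{katoLogStructuresII}. The subscheme $T^\times=g^{-1}(\epsilon)$ has the same underlying set as $g^{-1}(G_0^0)$, which is open and closed. Since $X$ is connected and $T^\times$ contains $(t_1,t_2)$, the image of $T^\times$ is all of $X$. Only then does Nori's argument on the reduced connected $U$ give an fppf torsor $T^\times_U$.
\item[(b)] \emph{After extending $T^\times_U$ by Theorem~\ref{A} to some klf torsor $\widetilde T$, you must identify $\widetilde T\simeq T_1\times_{T_0}T_2$ over $X$.} Otherwise $\widetilde T$ has no $k$-log point over $x_0$, so it is not an object of your pointed category. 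The paper first obtains $\widetilde T\times^{G^\times}G_i\simeq T_i$ from uniqueness of extension. It then shows that $\mathrm{Hom}_X(Y_1,Y_2)\to\mathrm{Hom}_U(Y_{1,U},Y_{2,U})$ is injective for klf torsors, using that the diagonal of a klf torsor is a closed immersion. This forces the extended maps to $T_0$ to be the expected ones.
\end{enumerate}
You flag the nontrivial-log-point case yourself as ``remaining care,'' but it is precisely the missing part of the proof.

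\textbf{Item (1) is false.} There is not at most one morphism between two objects. On the trivial torsor $(X\times\mu_n,\mu_n,(x_0,1))$, each endomorphism $\zeta\mapsto\zeta^a$ of $\mu_n$ gives a distinct endomorphism of the triple. Uniqueness holds only for morphisms out of the universal object, and it follows from the equalizer property (3) together with the fact that maps out of the limit factor through a finite stage. So (1) should be dropped, not proved.
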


%Naturally, when the log structures are all trivial, this recovers the maximal pro-linearly reductive quotient of the (classical) Nori fundamental group scheme of $X$. If $k$ is algebraically closed of characteristic $0$, then finite linearly reductive group schemes coincide with constant finite group schemes. Hence, in this case, the construction recovers the logarithmic \'etale fundamental group.\\

\textbf{Acknowledgments.}
The author thanks Enhao Feng, Jean Gillibert and Boaz Moremann for helpful discussions and comments,  Giulio Bresciani for helpful answers to questions about their paper, and Marta Pieropan for a feedback on a previous version. The author was supported by the NWO Vidi grant VI.Vidi.213.019. For the purpose of open access, a CC BY public copyright license is applied to any Author Accepted Manuscript version arising from this submission.

\section{Log torsors}
\addtocontents{toc}{\protect\setcounter{tocdepth}{2}}
\begin{definition}[Log scheme]
    A log scheme is the data of a scheme $X$ together with a sheaf of monoids $\mathcal{M}_X$ in the small \'etale site of ${X}$ with a morphism of sheaves of monoids $\alpha_X: \mathcal{M}_X \to \mathcal{O}_X$ such that $\alpha_X^{-1}(\mathcal O_X^\times)\xrightarrow{\sim}\mathcal O_X^\times$, where $\mathcal{O}_X$ is viewed as a monoid with the multiplicative law. The sheaf $\mathcal{M}_X$ is called a log structure. 
\end{definition}
\begin{notation}
    When $X$ is a log scheme, we denote $\underline{X}$ its underlying scheme.
\end{notation}

\begin{example}\label{log str}
  \begin{enumerate}
      \item If ${X}$ is a scheme, setting $\mathcal{M}_X:=\mathcal{O}_X^{\times}$ defines the trivial log structure, which allows to see any scheme as a log scheme.
      \item If ${X}$ is either a regular scheme or a toric variety and $j:U \hookrightarrow {X}$ is either a dense open subset which is the complement of normal crossing divisor $D$, or the dense torus of a toric variety, the sheaf of monoids $\mathcal{M}_X:=j_*\mathcal{O}_U^{\times} \cap \mathcal{O}_X$ defines a log structure on ${X}$ called the divisorial log structure.
      \item Let $P$ be a finitely generated monoid. The inclusion of monoids $P \subset \mathbb{Z}[P]$  induces a canonical log structure on the scheme $\Spec \mathbb{Z}[P]$ (cf.  \cite[Proposition I. 1.1.5]{Ogus2018Lectures}).
  \end{enumerate}
\end{example}

All the log schemes considered here are fine and saturated, i.e. the stalks of the log structure are fine (finitely generated and integral) and saturated monoids. Given a log scheme $X$, we denote by $(\mathrm{fs})$ (resp. $(\mathrm{fs}/X)$) the category of fine saturated log schemes (resp. fine saturated log schemes over $X$). Fine and saturated log schemes admit local charts by fine and saturated monoids: for every geometric point $x$ in the underlying scheme of $X$, there exists an \'etale neighborhood $U$ of $x$ and a morphism of log schemes $f:U \to \Spec \mathbb{Z}[P]$ which is strict, i.e. the log structure $\mathcal{M}_U:=\mathcal{M}_{X|U}$ of $U$ is isomorphic to the canonical log structure on $U$ associated to $f^{-1}\mathcal{M}_{\Spec \mathbb{Z}[P]} \to f^{-1}\mathcal{O}_{\Spec \mathbb{Z}[P]} \to \mathcal{O}_U$ (cf. \cite[Definition III.1.1.5]{Ogus2018Lectures}).\\

In particular, all fiber products of log schemes considered here are understood in the category of fs log schemes (which is in general different from the classical fiber product, see Example \ref{fiber prod}). We refer to \cite{Ogus2018Lectures} for a more comprehensive introduction to log geometry. We briefly recall the definitions of the Kummer log flat topology and Kummer log flat torsors and refer to \cite[Section 2]{katoLogStructuresII} for further details. \\

Log flat, log smooth and log \'etale morphisms of log schemes can be defined in terms of local charts of the log structures \cite[Definitions IV.3.1.1 and  IV.4.1.1]{Ogus2018Lectures} \cite[Theorem 4.1]{Kato1996Log-smooth-defo}. These notions naturally generalize their classical counterparts in algebraic geometry: a strict and flat (resp. smooth, resp. \'etale) morphism is log flat (resp. log smooth, resp. log \'etale). On the other hand, the diagonal morphism $\mathbb A^1 \longrightarrow \mathbb A^1\times \mathbb A^1$ between toric varieties endowed with their divisorial log structures is an example of a log flat morphism which is not flat. Likewise, a nodal curve over a point can be endowed with logarithmic structures making it log smooth over the point. Tamely ramified covers in the sense of \cite{tame-fundamental} and toric blow-ups provide examples of log \'etale morphisms. Finally, Kummerness is an additional condition on the morphism of log structures, which can also be described on the local charts \cite[Definition III.2.3.2]{Ogus2018Lectures}.

\begin{definition}[klf topology]
Let $X$ be a log scheme. A family of morphisms $\{f_i : X_i \to X\}_i$ of log schemes is called a covering of $X$ for the Kummer log flat (resp. Kummer log \'etale) topology if the conditions (1) (resp. (1)$'$) and (2) are satisfied:
\begin{enumerate}
    \item[(1)] The morphisms $f_i$ are log flat and of Kummer type, and the underlying morphisms of schemes are locally of finite presentation.
    \item[(1)$'$] The morphisms $f_i$ are log \'etale and of Kummer type.
    \item[(2)] $X = \bigcup_i f_i(X_i)$ (set-theoretically).
\end{enumerate}

We obtain a Grothendieck topology, called the Kummer log flat (resp. Kummer log \'etale) topology, denoted klf (resp. k\'et) for simplicity, on $(\mathrm{fs}/X)$ by taking coverings as above, and we use the induced site to define log torsors.
\end{definition}

\begin{definition}[Logarithmic torsor]
Let $Y$, $X$ be log schemes and $G$ a group scheme over $X$ such that $G \to X$ is strict. Assume that $G$ acts on $Y$ over $X$ in the category of log schemes. We say that $Y \to X$ is a klf $G$-torsor (or simply a log $G$-torsor) if there exists a klf covering $X_i \to X$ and $G$-equivariant isomorphisms $Y \times_X X_i \simeq G_{X_i}$ over $X_i$, where $G_{X_i}$ is equipped with the left translation action of itself. 
\end{definition}

\begin{example}\label{fiber prod} Let $R$ be a discrete valuation ring with uniformizer $\pi$. Let $X = \Spec R$ with divisorial log structure induced by the closed point. It admits a chart
\[
\mathbb N \longrightarrow R,
\qquad
1 \longmapsto \pi .
\]

Set
\[
Y = \Spec R[T]/(T^r - \pi)
\]
with log structure induced by the chart
\[
\mathbb N \longrightarrow R[T]/(T^r-\pi),
\qquad
1 \longmapsto T .
\]
The morphism of log schemes $Y \to X$ is induced by the Kummer map of monoids
\[
\mathbb N \xrightarrow{\times r} \mathbb N.
\]

There is a natural action of $\mu_r$ on $Y$ (which is not free on the scheme-theoretic level) making $Y \to X$ a klf $\mu_r$-torsor. If $r$ is invertible in $R$, then it is a Kummer log \'etale $\mu_r$-torsor. In particular, the fs fiber product $Y \times_X Y$ is isomorphic to $Y \times \mu_r$. 
\end{example}

\section{Log Purity Theorem}
\subsection{Moduli of torsors on toric varieties}
The purpose of this section is to extend the classical purity theorem to the logarithmic setting (Theorem \ref{lin-red}). Such a version has already been established in characteristic zero, identifying generically \'etale covers that are tamely ramified along the boundary with logarithmically \'etale covers. Our aim here is to also treat the positive characteristic case, where additional tamely ramified covers may occur. We assume in this section that our schemes are locally noetherian. We recall the statement of the classical purity theorem.

\begin{theorem}[Zariski-Nagata purity]
Let $X$ be a regular scheme and $U \subset X$ a dense open subset containing all codimension one points of $X$. Let
$f:Y \to X$ be a finite morphism, which is \'etale over $U$. Then $f$ is \'etale everywhere.
\end{theorem}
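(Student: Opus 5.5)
The statement is the classical Zariski--Nagata theorem, and I will treat it as background rather than something to reprove from scratch. Still, here is the route I would follow. One normalization comes first. As stated, $f$ must be understood with $Y$ normal, or equivalently with $Y$ replaced by $\Spec$ of $f_*\mathcal O_{f^{-1}(U)}$. Otherwise a disjoint union of $X$ with a closed point of codimension $\geq 2$ would be a counterexample. So I replace $\mathcal B:=f_*\mathcal O_Y$ by the reflexive hull $j_*j^*\mathcal B$, where $j\colon U\hookrightarrow X$. This is a finite $\mathcal O_X$-algebra because $X$ is normal and $\mathcal B|_U$ is locally free.

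Étaleness is local, so I reduce to $X=\Spec A$ with $A$ a regular local ring of dimension $d$. I then argue by induction on $d$. By induction the branch locus is contained in the closed point, so I may assume $U$ is the punctured spectrum and $d\ge 2$ (for $d\le 1$ there is nothing to prove). Completing is harmless, since $\widehat A$ is faithfully flat over $A$ and formation of $j_*$ commutes with flat base change. So I also assume $A$ complete.

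\emph{Case $d=2$.} Here $B=\Gamma(U,\mathcal B)$ is a reflexive $A$-module, so $\operatorname{depth}B=2=\dim A$. Since $A$ is regular, the Auslander--Buchsbaum formula gives $\operatorname{pd}B=0$, so $B$ is free. Now consider the discriminant $\delta=\det(\mathrm{Tr}_{B/A}(e_ie_j))$ with respect to a basis. It generates a principal ideal, and this ideal is the unit ideal at every height-one prime because $B$ is étale over $U$. Since $A$ is a UFD, $\delta$ is a unit, so $B$ is étale over $A$.

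\emph{Case $d\ge 3$.} The same argument would work if $B$ were free, but reflexivity alone no longer gives freeness. This is where I expect the main obstacle. My first attempt would be Grothendieck's Lefschetz-type argument from SGA~2. Choose a regular parameter $t$ so that $A/t$ is regular of dimension $d-1$. Then show that restriction of finite étale covers from the punctured spectrum $U$ of $A$ to the punctured spectrum $U_t$ of $A/t$ is fully faithful and essentially surjective on the relevant covers. This uses $\operatorname{depth}A\ge 3$ together with formal-function and algebraization arguments for complete $A$.

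By induction, the restriction of $B|_U$ to $U_t$ extends to a finite étale $A/t$-algebra. This extension lifts uniquely to a finite étale $A$-algebra $B'$, by henselianity of the complete ring $A$ and topological invariance of the étale site. The comparison on punctured spectra then gives $B'|_U\simeq B|_U$, hence $B'\simeq B$ by reflexivity. So $B$ is étale over $A$, which completes the induction.
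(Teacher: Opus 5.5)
The paper gives no proof of this statement. It is quoted as classical background, only as motivation, and the introduction states it for finite \emph{flat} $f$. So your sketch has to stand on its own, and it is sound in outline. It is Grothendieck's argument from SGA~2, Exp.~X: Auslander--Buchsbaum plus the discriminant in dimension $2$, and for $d\ge 3$ induction through a regular parameter $t$, using henselian lifting and a Lefschetz-type comparison on punctured spectra.

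One framing claim is wrong. You are right that the printed statement needs an extra hypothesis, but ``$Y$ normal'' is neither equivalent to your replacement nor sufficient. Your own counterexample $X\sqcup\Spec k(x)$ is a normal scheme. What is needed is that $Y$ is normal \emph{and} $f^{-1}(U)$ is dense in $Y$, i.e.\ $\mathcal O_Y=j_*\mathcal O_{f^{-1}(U)}$. That is exactly what your reflexive-hull replacement arranges.

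If instead you assume $f$ flat, as the introduction does, your $d=2$ discriminant argument works verbatim in every dimension. The branch locus is then the zero locus of the discriminant of the locally free algebra $f_*\mathcal O_Y$. By Krull's principal ideal theorem it is empty or of pure codimension one, so no induction is needed, and not even regularity of $X$. The real content of Zariski--Nagata is that the normalization of $X$ in $f^{-1}(U)$ is flat, and that is what your induction proves.

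In the step $d\ge 3$, you only use \emph{full faithfulness} of $\mathrm{F\acute{e}t}(U)\to\mathrm{F\acute{e}t}(U_t)$. Essential surjectivity is supplied by the induction hypothesis together with the henselian lift, so no algebraization theorem is needed.

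For full faithfulness, $\operatorname{depth}A\ge 3$ is not quite the right input, since it only controls $\mathcal O_U$. You need $\Gamma(U,\mathcal E)\to\varprojlim_n\Gamma(U,\mathcal E/t^n\mathcal E)$ to be bijective for every vector bundle $\mathcal E$ on $U$, e.g.\ the $\mathcal{H}om$-bundle between the algebras of the two covers. Set $M=\Gamma(U,\mathcal E)$. Since $M$ is finite over the complete ring $A$, this map is injective with cokernel the $t$-adic Tate module of $H^1(U,\mathcal E)=H^2_{\mathfrak m}(M)$.

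That Tate module vanishes because $H^2_{\mathfrak m}(M)$ has finite length. By local duality it is Matlis dual to $\mathrm{Ext}^{d-2}_A(M,A)$. That module is supported at $\mathfrak m$, because $M$ is locally free on $U$ and $d-2\ge 1$. Combined with the unique lifting of finite \'etale covers of $U_t$ to the thickenings $U\cap V(t^n)$, this gives the comparison you invoke.
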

\begin{example}\label{tam}
   Let $f : E \xrightarrow{2:1} \mathbb{P}^1_{\mathbb{C}}$, $(x,y)\mapsto x$, where $E$ is the elliptic curve defined by $y^2=(x-1)(x-2)(x-3)$. Then $f$ is a finite generically \'etale morphism of degree $2$, ramified exactly at the three (codimension one) points $x=1,2,3$. Moreover, the group $\mathbb{Z}/2\mathbb{Z}$ acts on $E$ via $(x,y)\mapsto (x,-y)$, and this action is free away from the ramification locus of $f$.
\end{example}

Example \ref{tam} illustrates a case of an \'etale $\mathbb{Z}/2\mathbb{Z}$-torsor over a dense open subset of $\mathbb{P}^1_{\mathbb{C}}$ which acquires ramification at codimension one points when extended to the entire projective line.
The classical purity theorem does not cover situations such as in Example~\ref{tam}. A logarithmic version has since been established, taking into account the case where $U$ is a dense open subset whose complement is a divisor.\\

We refer to \cite[Definition 2.1]{KatoToric} for the definition of a log regular log scheme. In particular, important examples include Examples~\ref{log str}(b). Furthermore, if $X$ is log regular, then the locus where the log structure is trivial is a dense open subset.

\begin{theorem}\cite{FK,Mochizuki}\label{logetpur}
Let $X$ be a log regular log scheme, and let $U \subset X$ be the dense open subset where the log structure is trivial. Then the restriction functor
\[
\mathrm{fk\acute{e}t}(X) \longrightarrow \mathrm{f\acute{e}t}(U)
\]
from the category of finite Kummer log \'etale covers of $X$ to the category of finite \'etale covers of $U$ is fully faithful. Its essential image consists of those finite \'etale covers of $U$ which are tamely ramified along $X \setminus U$ in the sense of Grothendieck--Murre (cf. \cite{tame-fundamental}).
\end{theorem}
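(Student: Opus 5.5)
The strategy is to reduce, étale locally on $X$, to the toric model, and there to handle full faithfulness and the description of the essential image separately. Since $X$ is log regular, Kato's structure theory \cite{KatoToric} gives, étale locally around a point $x$, a chart $P=\overline{\mathcal{M}}_{X,\bar x}$, a toric monoid. The completed local ring $\widehat{\mathcal{O}}_{X,\bar x}$ is then of the form $W[[P]][[t_1,\dots,t_r]]/(\theta)$, or $k[[P]][[t_1,\dots,t_r]]$ in equicharacteristic, where $t_1,\dots,t_r$ are regular parameters of the regular stratum through $x$. Both categories in the statement are stacks for the étale topology on $\underline{X}$, and the restriction functor is compatible with étale base change; the log-regularity-preserving étale maps respect the locus $U$ and are compatible with fs fiber products of strict maps. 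So it will suffice to prove the statement étale locally on $X$, or even after passing to strict henselizations at points of $X\setminus U$. Here one needs a descent step: finite étale covers of $U$ descend along $\mathrm{Spec}\,\mathcal{O}^{sh}_{X,\bar x}$, using limit arguments.

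\textbf{Full faithfulness.} I would argue that every finite Kummer log étale cover $Y\to X$ has $\underline{Y}$ log regular. Indeed, $Y$ is log étale over a log regular scheme, and log regularity is preserved by log smooth morphisms (Kato). In particular $\underline{Y}$ is normal, and $Y\times_X U$ is dense in $Y$. A morphism between two such covers is then determined on $U$. Conversely, a $U$-morphism $Y_U\to Y'_U$ extends to the normalizations: $\underline{Y}$ is the normalization of $\underline{X}$ in $Y_U$, because $Y\to X$ is finite and $\underline{Y}$ is normal. It remains to check that the extended map is a morphism of log schemes. This holds because on log regular schemes the log structure is the divisorial one $\mathcal{O}_Y\cap j_*\mathcal{O}_{Y_U}^\times$, again by \cite{KatoToric}.

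\textbf{Essential image.} First I would check that the restriction of a Kummer log étale cover is tamely ramified. Étale locally, such a cover is a disjoint union of standard Kummer covers $X\times_{\mathbb{Z}[P]}\mathbb{Z}[Q]$, with $P\to Q$ Kummer of order invertible on $X$. Their restrictions to $U$ are tame along every codimension one point of the boundary, since at a height one prime of $P$ the extension is a Kummer extension of degree prime to $p$.

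The converse, which I expect to be the main obstacle, says that a tame cover extends. Let $V\to U$ be tamely ramified and take $Y$ to be the normalization of $X$ in $V$. I would use Abhyankar's lemma in its log form. Let the inertia at the generic points of the boundary components through $x$ have orders $n_i$, prime to $p$, and choose $n$ divisible by all of them. Let $X_n\to X$ be the standard Kummer cover given by $P\to\frac1nP$ on the chart. Then I would show that the normalization of $X_n$ in $V\times_U U_n$ is étale over $X_n$ in codimension one. By Abhyankar's lemma, the extension becomes unramified at the generic points of the boundary of $X_n$. The key point is then a purity statement on $X_n$ in the log sense: the cover is étale over an open set whose complement in $X_n$ has codimension $\ge2$ relative to the strata. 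Here $X_n$ is log regular but not regular, so Zariski–Nagata does not apply directly. Instead I would use the local structure $k[[\frac1nP]][[t]]$ and the known purity for finite étale covers over such toric singularities with tame, prime-to-$p$ local fundamental group. Concretely, I would attempt an induction on the rank of $P$ by cutting out by the $t_i$, which reduces to the case $r=0$, together with a cohomological depth argument (Serre's $S_2$ via Hochster's Cohen–Macaulayness of toric rings). This gives that the normalized cover is finite étale over $X_n$ with a descent datum for the Kummer group $\mu_n$-action. Descending along the Kummer étale cover $X_n\to X$ then yields a finite Kummer log étale cover of $X$ extending $V$.
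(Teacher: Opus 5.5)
The paper gives no proof of this theorem; it quotes it from \cite{FK,Mochizuki}. Your full faithfulness argument is fine: a finite Kummer \'etale cover $Y$ of $X$ is log regular, hence normal, it is the normalization of $X$ in $Y_U$, and its log structure is $\mathcal O_Y\cap j_*\mathcal O_{Y_U}^\times$. The direction ``Kummer \'etale implies tame'' is also fine.

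The gap is in the converse. After choosing $n$ divisible by the codimension-one inertia orders, you conclude that the normalization of $X_n$ in $V\times_U U_n$ is finite \'etale over $X_n$. This is false, because toric (log regular) singularities are in general not pure. Take $X$ to be the strict henselization at the vertex of the quadric cone $\Spec k[x,y,z]/(xy-z^2)$, with $\mathrm{char}\,k\neq 2$ and its toric log structure, so $P=\{(a,b)\in\mathbb N^2 : a+b \text{ even}\}$. Let $V\to U$ be the $\mu_2$-torsor given by $\mathbb A^2\to X$, $(u,v)\mapsto(u^2,v^2,uv)$.
\begin{itemize}
\item This map is \'etale away from the vertex, so both boundary divisors have trivial inertia and your recipe allows $n=1$.
\item The normalization of $X$ in $V$ is the germ of $\mathbb A^2$. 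Its fiber over the vertex has length $3$ while the degree is $2$, so it is not even flat.
\item Enlarging $n$ does not help as long as $n$ is odd. Then $\frac1nP\simeq P$, $V_n$ stays connected, and the normalization of $X_n$ in $V_n$ is $X\times_{\mathbb Z[P]}\mathbb Z[\frac1n\mathbb N^2]$, which is the same non-flat picture.
\end{itemize}
Your induction on the $t_i$ reduces exactly to this base case $r=0$, and the depth argument cannot repair it. $S_2$ and Hochster's theorem tell you the normalized cover is Cohen--Macaulay. A finite Cohen--Macaulay cover is automatically flat over a \emph{regular} base (miracle flatness), but not over a singular one, and that is precisely where Zariski--Nagata uses regularity.

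What is missing is the real content of log purity. Let $X$ be strictly local at a boundary point $\bar x$ with $P=\overline{\mathcal M}_{X,\bar x}$. One must show that every tame cover of $U$ is split by some Kummer cover $U_m$, with $m$ prime to the residue characteristic $p$. Equivalently, the tame fundamental group of $U$ is $\mathrm{Hom}(P^{\mathrm{gp}},\prod_{\ell\neq p}\mathbb Z_\ell(1))$. The integer $m$ depends on the local fundamental group of the toric singularity, not only on codimension-one inertia, so Abhyankar's lemma alone cannot produce it. After the Abhyankar step, the cover of $U_n$ that is unramified in codimension one has to be shown to be \emph{Kummer} \'etale over $X_n$, not \'etale.

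This needs a genuinely additional input, for instance one of the following:
\begin{itemize}
\item a reduction to a regular situation where Grothendieck--Murre applies (a Kummer cover by a free monoid when $P$ is simplicial, a resolution by log blow-ups in general), together with a descent argument back to $X$;
\item the Kummer-theoretic computation $H^1(U,\mu_m)\simeq P^{\mathrm{gp}}/m$ on strictly local $X$, using $R^1j_*\mathbb G_m=0$ (the mechanism behind Lemmas \ref{Gmlog} and \ref{mu_n}), combined with a way to handle non-abelian Galois groups.
\end{itemize}
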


In particular, one can deduce the following corollary:
\begin{corollary}\label{inv}
    Let $X$ be a log regular log scheme and $U \subset X$ the dense open subset where the log structure is trivial. Let $H$ be a constant group scheme over $X$ with invertible order. Then the restriction functor
\[
\mathrm{Tor}_{\mathrm{k\'et}}(X,H)
\longrightarrow\;
\mathrm{Tor}_{\mathrm{\'et}}(U,H)
\]
is an equivalence of categories.
\end{corollary}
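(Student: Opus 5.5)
The plan is to deduce the corollary from Theorem \ref{logetpur} by translating $H$-torsors into finite covers with $H$-action, and then checking that tameness is automatic because $|H|$ is invertible.

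Given a Kummer log étale $H$-torsor $Y \to X$, the underlying morphism is finite, since $H$ is finite and finiteness descends along klf coverings. So $Y$ is a finite Kummer log étale cover of $X$ carrying an $H$-action. Its restriction to $U$ is an étale $H$-torsor, because over $U$ the log structure is trivial and Kummer log étale coverings of $U$ are just étale coverings. This defines the restriction functor.

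\emph{Full faithfulness.} A morphism of $H$-torsors is an $H$-equivariant morphism of the underlying covers. By Theorem \ref{logetpur}, $\mathrm{Hom}_X(Y,Y') \to \mathrm{Hom}_U(Y_U,Y'_U)$ is bijective. Applying the same bijection to $H \times Y \to Y'$, which is a cover of $X$ since $H \times Y$ is finite Kummer étale, shows that equivariance can be checked on $U$. Hence the restriction functor is fully faithful.

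\emph{Essential surjectivity.} Let $P \to U$ be an étale $H$-torsor.
\begin{enumerate}
\item First show $P$ is tamely ramified along $X\setminus U$ in the sense of Grothendieck--Murre. Tameness is a condition at the generic points $\eta$ of the boundary: after passing to strict henselizations, the inertia acting on the fibre of $P$ over the generic point of $\mathcal O^{sh}_{X,\eta}$ must have order prime to $p$. The Galois action on this fibre factors through a quotient isomorphic to a subgroup of $H$, because it commutes with the simply transitive $H$-action. Its order therefore divides $|H|$, which is invertible, so the wild inertia acts trivially.
\item By Theorem \ref{logetpur}, $P$ extends to a finite Kummer log étale cover $Y \to X$.
\item The action maps $H\times P \to P$ extend uniquely to $H\times Y \to Y$ by full faithfulness, and the action axioms follow by uniqueness.
\item Finally, show $Y$ is a klf $H$-torsor, meaning $H\times Y \to Y\times_X Y$ is an isomorphism, with the fs fibre product. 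Both sides are finite Kummer log étale over $X$: the fs product of Kummer étale covers is Kummer étale. Their restrictions to $U$ agree. Full faithfulness of Theorem \ref{logetpur} applied to covers of $X$ then gives the isomorphism. In addition, $Y \to X$ is a Kummer étale covering that trivializes itself, so $Y$ is a torsor.
\end{enumerate}

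The main obstacle is step (1): making the tameness argument precise at every codimension-one boundary point, in particular identifying the relevant Galois action with a subgroup of $H$ when the residue field extensions are nontrivial. A second point to verify is that Theorem \ref{logetpur} can be applied to $Y\times_X Y$ on a possibly non-connected cover. If needed, both points would be handled by working étale-locally on $X$ with charts, where the Kummer covers are explicit of the form $X\otimes_{\mathbb Z[P]}\mathbb Z[P^{1/n}]$ with $n$ invertible.
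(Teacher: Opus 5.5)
Your proposal is correct and is the deduction the paper intends: the paper states this corollary as an immediate consequence of Theorem~\ref{logetpur} and writes no proof, and your argument supplies the missing details, namely viewing $H$-torsors as finite Kummer \'etale covers with $H$-action, noting that tameness is automatic because the image of inertia is isomorphic to a subgroup of $H$ and so has order prime to the residue characteristics, and using full faithfulness to extend the action and the isomorphism $H\times Y\simeq Y\times_X Y$. A few small points to tighten: finiteness of a Kummer \'etale $H$-torsor is not plain descent, since klf coverings are not flat on underlying schemes, but it follows from \cite[Theorem 2.1]{GillibertTame}; in step (4), $Y\to X$ is surjective because its image is closed (by finiteness) and contains the dense open $U$; and the two ``obstacles'' you flag are not real issues, since strict henselization makes the residue field separably closed and $\mathrm{fk\acute{e}t}(X)$ already contains non-connected covers.
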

\begin{example}
    Example \ref{tam} provides an example of an \'etale $\mathbb{Z}/2\mathbb{Z}$-torsor on a dense open subset of $\mathbb{P}^1$ extending to a log \'etale $\mathbb{Z}/2\mathbb{Z}$-torsor over $\mathbb{P}^1_{\mathbb{C}}$.
\end{example}

In positive characteristic, one has more tamely ramified covers, arising from actions of linearly reductive group schemes (for example $\mu_p$ in characteristic $p>0$, cf. Example~\ref{fiber prod}). In this section, we extend the purity theorem to this setting.

\begin{definition}
    Let $X$ be a log scheme. We denote by $\mathbb G_{m,\log}$ the functor
   \begin{align*} (fs/X) & \to (Ab)\\
                      T & \mapsto \Gamma(T,M_T^{gp})
    \end{align*}
    It is a sheaf for the Kummer log flat topology by \cite[Theorem 3.2]{Kato}. 
\end{definition}

\begin{lemma}\label{et=fppf}
Let $X$ be a logarithmic scheme.  Then the natural map
\[
H^1_{\acute{e}t}(X,\mathbf \BG_{m,\log})
\to
H^1_{\mathrm{fppf}}(X,\mathbf \BG_{m,\log})
\]
is an isomorphism. Equivalently, every fppf $\mathbf \BG_{m,\log}$-torsor on
$X_{}$ descends uniquely to an \'etale $\mathbf \BG_{m,\log}$-torsor on
$X_{}$.
\end{lemma}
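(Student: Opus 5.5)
We compare étale and fppf cohomology of $\mathbb G_{m,\log}$ using the extension
\[
0 \to \mathbb G_m \to \mathbb G_{m,\log} \to \overline{\mathbb G}_{m,\log} \to 0,
\]
where $\overline{\mathbb G}_{m,\log}$ denotes the sheaf $T \mapsto \Gamma(T, \overline{M}_T^{gp})$ with $\overline{M}_T = M_T/\mathcal O_T^\times$. This sequence is exact on the big étale and the big fppf sites of $X$, restricted to strict $X$-schemes (equivalently, schemes over $\underline X$ with pulled-back log structure). The plan is to apply the five lemma to the morphism of long exact cohomology sequences induced by the change of topology $\varepsilon$ from fppf to étale.

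For $\mathbb G_m$, Grothendieck's Hilbert 90 gives $H^i_{\acute et}(X,\mathbb G_m)\simeq H^i_{fppf}(X,\mathbb G_m)$ for $i=0,1$, and injectivity for $i=2$. It is cleaner to use Leray: it suffices to show $R^1\varepsilon_*\mathbb G_{m,\log}=0$ and $\varepsilon_*\mathbb G_{m,\log}=\mathbb G_{m,\log}$. The latter holds because $\mathbb G_{m,\log}$ is already an fppf sheaf on strict schemes; this is the cited result of Kato, or directly fpqc descent for $\overline M^{gp}$, which is pulled back from the small étale site. Leray then yields
\[
0\to H^1_{\acute et}(X,\varepsilon_*F)\to H^1_{fppf}(X,F)\to H^0(X,R^1\varepsilon_*F),
\]
so the isomorphism follows once $R^1\varepsilon_*\mathbb G_{m,\log}=0$.

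This vanishing is étale-local, so we may work over a strictly henselian local ring $A$. From the short exact sequence, $R^1\varepsilon_*\mathbb G_m=0$ by Hilbert 90, so it remains to show that $\varepsilon_*\mathbb G_{m,\log}\to\varepsilon_*\overline{\mathbb G}_{m,\log}$ is surjective étale-locally, and that $R^1\varepsilon_*\overline{\mathbb G}_{m,\log}=0$.

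Surjectivity is clear: étale-locally on a chart, any section of $\overline M^{gp}$ lifts to $M^{gp}$.

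For the vanishing of $R^1\varepsilon_*\overline{\mathbb G}_{m,\log}$, the key observation is that on strict $X$-schemes, $\overline M^{gp}$ is the pullback of a constructible sheaf of finitely generated free abelian groups on $X_{\acute et}$. Locally on a chart $P$ it is constant along strata, so it is assembled from pushforwards $i_*\underline{P^{gp}}$ of constant sheaves on locally closed strata. Fppf and étale cohomology agree in degree $1$ for constant sheaves of finitely generated abelian groups, since torsors under such sheaves are representable by étale schemes, so fppf descent lands in étale torsors. We then handle the strata pieces by the analogous statement for pushforwards. Alternatively, one can argue directly: an fppf $\overline M^{gp}$-torsor is a sheaf that is fppf-locally isomorphic to $\overline M^{gp}$, and since $\overline M^{gp}$ is pulled back from the small étale site, such a twisted form is given by an étale sheaf by effective descent for étale sheaves along fppf covers (SGA4 VIII, 9.4).

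\textbf{Main obstacle.} The hardest step is precisely this claim that fppf twisted forms of the constructible sheaf $\overline M^{gp}$ are étale-local. I would try first the descent argument, namely that fppf descent of étale (algebraic-space) objects is effective, applied to the étale space representing the torsor. Uniqueness of the descent is then the injectivity part of Leray.
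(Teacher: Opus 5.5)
Your argument is correct. It shares the paper's frame: the Leray sequence for $\varepsilon$, reducing everything to $R^1\varepsilon_*\BG_{m,\log}=0$. The two proofs differ in how they get this vanishing.

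The paper passes to geometric stalks and quotes a logarithmic Hilbert 90 \cite[Corollary 3.21]{Niziol2008-K-theory}, namely $H^1_{\mathrm{fppf}}(X,\BG_{m,\log})=0$ for $X$ strictly henselian local. You instead argue by d\'evissage along $0\to\BG_m\to\BG_{m,\log}\to\overline{\BG}_{m,\log}\to 0$:
\begin{itemize}
\item Classical Hilbert 90 gives $R^1\varepsilon_*\BG_m=0$, hence $R^1\varepsilon_*\BG_{m,\log}\hookrightarrow R^1\varepsilon_*\overline{\BG}_{m,\log}$.
\item The latter vanishes. Since $\overline{\mathcal M}_T=f^{-1}\overline{\mathcal M}_X$ for strict $f\colon T\to X$, the sheaf $\overline{\BG}_{m,\log}$ is represented by the \'etale group algebraic space attached to $\overline{\mathcal M}_X^{gp}$ on $\underline{X}_{\acute{e}t}$.
\item So any fppf torsor under it descends to an algebraic space \'etale and surjective over $T$, which has sections \'etale-locally.
\end{itemize}

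What your route buys is self-containment. It uses only classical Hilbert 90 and fppf descent of \'etale algebraic spaces. It works over every $T$ at once, with no need to identify stalks of $R^1\varepsilon_*$ with cohomology of strict henselizations. It also checks directly that $\BG_{m,\log}$ is an fppf sheaf. The paper's proof is shorter but rests entirely on the log-specific citation.

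Two minor points. First, the \'etale-local surjectivity of $\varepsilon_*\BG_{m,\log}\to\varepsilon_*\overline{\BG}_{m,\log}$ is not needed, since the injectivity above already follows from $R^1\varepsilon_*\BG_m=0$. Second, the stratification route via $i_*\underline{P^{gp}}$ is too vague as written, but the descent route you single out is enough on its own.
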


\begin{proof} 

Let \[\epsilon: (Sch/X)_{fppf} \to (Sch/X)_{\acute{e}t}\]
be the morphism of sites.\\
Given an abelian sheaf $F$ on $(Sch/X)_{fppf}$, the Leray spectral sequence gives
\begin{equation*}
    H^p_{\acute{e}t}(X, R^q\epsilon_*F) \implies H^{p+q}_{fppf}(X,F).
\end{equation*}
It yields an exact sequence
\[0 \to H^1_{\acute{e}t}(X, \epsilon_*F) \to H^1_{fppf}(X,F) \to H^0_{\acute{e}t}(X,R^1\epsilon_*F).\]
We apply this to $F:= \epsilon^*\BG_{m,log}=\BG_{m,log}$. We have that $\BG_{m,log} \to \epsilon_*\epsilon^*\BG_{m,log}$ is an isomorphism. We then get
\[0 \to H^1_{\acute{e}t}(X, \BG_{m,log}) \to H^1_{fppf}(X, \BG_{m,log}) \to H^0_{\acute{e}t}(X,R^1\epsilon_* \BG_{m,log}).\]
It is enough to prove that $R^1\epsilon_*\BG_{m,log}=0$. For that, since it's a sheaf on $(Sch/X)_{\acute{e}t}$, it is enough to show that its geometric stalks vanish. If $X$ is local strictly henselian, then $H^1_{fppf}(X,\BG_{m,log})=H^1_{\acute{e}t}(X,\BG_{m,log})=0$ by Hilbert 90 (\cite[Corollary 3.21]{Niziol2008-K-theory}). This concludes the proof.\\

\end{proof}

\begin{lemma}\label{Gmlog}
    Let $X$ be a log regular log scheme. Let $U$ be the dense open subset of $X$ with trivial log structure. Then the restriction morphism
    \[H^1_{\acute{e}t}(X,\BG_{m,log}) \to H^1_{\acute{e}t}(U,\BG_{m})\]
    is an isomorphism.
\end{lemma}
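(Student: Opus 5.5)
The plan is to compare both sides through the exact sequence defining $\overline{\mathcal M}_X^{gp}$. On the small étale site of $X$ there is the exact sequence
\[
0 \to \mathcal O_X^\times \to \mathcal M_X^{gp} \to \overline{\mathcal M}_X^{gp} \to 0,
\]
and $\Gamma(-,\mathcal M^{gp})$ restricted to the small étale site is exactly $\BG_{m,\log}$. On the other side, $j:U\hookrightarrow X$ gives $j_*\mathcal O_U^\times$. By log regularity (Kato, \cite{KatoToric}), $\underline X$ is normal and $\mathcal M_X = j_*\mathcal O_U^\times\cap \mathcal O_X$. Since $\mathcal M_X$ is saturated, one gets $\mathcal M_X^{gp} \simeq j_*\mathcal O_U^\times$: a unit on $U$, étale locally, has divisor supported on $X\setminus U$, and this divisor is a $\mathbb Z$-combination of divisors of elements of $\mathcal M_X$. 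Hence
\[
H^1_{\acute{e}t}(X,\BG_{m,\log}) = H^1_{\acute{e}t}(X, j_*\mathbb G_{m,U}),
\]
and the restriction map becomes the edge map $H^1(X,j_*\mathbb G_m)\to H^1(U,\mathbb G_m)$ of the Leray spectral sequence for $j$. This gives the exact sequence
\[
0\to H^1(X,j_*\mathbb G_m)\to H^1(U,\mathbb G_m)\to H^0(X,R^1j_*\mathbb G_m).
\]
Injectivity is therefore automatic, and surjectivity reduces to showing $R^1j_*\mathbb G_m=0$.

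The stalk of $R^1j_*\mathbb G_m$ at a geometric point $\bar x$ is $\mathrm{Pic}(U\times_X \Spec \mathcal O^{sh}_{X,\bar x})$. The strict henselization is again log regular. It is normal, and it is in fact factorial modulo the log divisors: Kato's structure theorem describes its completion as a quotient of $k[[P]][[t_1,\dots,t_r]]$, or of its mixed characteristic analogue. So the key step is the following claim: for a strictly henselian local log regular $A$ with $V$ the trivial locus, $\mathrm{Pic}(V)=0$.

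To prove the claim, use normality. For $V\subset \Spec A$ with $A$ normal, $\mathrm{Pic}(V)\hookrightarrow \mathrm{Cl}(V)=\mathrm{Cl}(A)/\langle \text{boundary prime divisors}\rangle$. It therefore suffices to know that $\mathrm{Cl}(A)$ is generated by the classes of the irreducible components of the boundary. This follows from the toric description, since the class group of a log regular local ring is $P^{gp}/P$-type data coming from the chart monoid. Concretely, Kato shows that $\mathrm{Cl}(A)\simeq \mathrm{Cl}$ of the monoid $P=\overline{\mathcal M}_{\bar x}$, which is generated by boundary divisors. This is the main obstacle. I would first try to cite Kato's result that $A$ is $P$-graded-factorial, namely that the divisor class group is generated by the height-one primes of $P$. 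Failing that, I would pass to the completion using Mori's theorem ($\mathrm{Cl}(A)\hookrightarrow\mathrm{Cl}(\hat A)$ for henselian excellent normal $A$) and compute there from the explicit power-series description.

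Putting the pieces together, $R^1j_*\mathbb G_m=0$, so the restriction map is an isomorphism. By Lemma \ref{et=fppf}, the same statement holds with fppf cohomology on the left.
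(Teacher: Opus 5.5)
Your skeleton is exactly the paper's: the Leray sequence for $j\colon U\hookrightarrow X$, the identification $\mathcal M_X^{gp}\simeq j_*\mathcal O_U^\times$ (Kato, Theorem 11.6), and reduction of surjectivity to $R^1j_*\BG_{m,U}=0$. The paper does not prove this vanishing; it imports it from Gabber--Ramero (Theorem 12.6.38). You try to prove it, and that is where the gap is. Your reduction is correct but only relocates the difficulty. The trivial locus $V$ of $\Spec A$, with $A=\mathcal O^{sh}_{X,\bar x}$, is regular, so $\mathrm{Pic}(V)=\mathrm{Cl}(V)$. Hence the vanishing is \emph{equivalent} to $\mathrm{Cl}(A)$ being generated by the boundary height-one primes, which is the entire content of the lemma, and you do not establish it.

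``Kato shows $\mathrm{Cl}(A)\simeq\mathrm{Cl}(P)$'' comes without a precise reference; the paper uses Kato only for $\mathcal M_X^{gp}\simeq j_*\mathcal O_U^\times$. The fallback hides genuine theorems. Mori's injectivity $\mathrm{Cl}(A)\hookrightarrow\mathrm{Cl}(\hat A)$ does reduce you to the completion. But class groups can grow under completion and under adjoining power-series variables. So the assertion that $\mathrm{Cl}(k[[P\oplus\mathbb N^r]])$ is generated by the facet primes needs a specifically toric argument, which you have not supplied. In mixed characteristic, $\hat A\simeq C(k)[[P]][[t_1,\dots,t_r]]/(\theta)$ is not a monoid algebra, so nothing can be read off directly. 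Either cite the vanishing of $R^1j_*\mathcal O_U^\times$ for log regular $X$, as the paper does, or actually prove the class group statement.

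A smaller point: your justification of $\mathcal M_X^{gp}\simeq j_*\mathcal O_U^\times$ is not right as written. Saturation plays no role. The claim that the (boundary) divisor of a unit on $U$ is a $\mathbb Z$-combination of divisors of sections of $\mathcal M_X$ is not automatic. The image of $P^{gp}$ in the free group on the facets of $P$ has cokernel $\mathrm{Cl}(P)$, and the fact that principal boundary divisors land in this image is itself part of what Kato proves. The direct argument from $\mathcal M_X=\mathcal O_X\cap j_*\mathcal O_U^\times$ is this. On a chart neighbourhood with chart $P$, the trivial locus is $D(\alpha_X(p_0))$ with $p_0$ the sum of the generators of $P$. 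So a local section $f$ of $j_*\mathcal O_U^\times$ has the form $a/\alpha_X(p_0)^N$ with $a=f\,\alpha_X(p_0)^N\in\mathcal O_X\cap j_*\mathcal O_U^\times=\mathcal M_X$.
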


\begin{proof}
    Write $j: U \hookrightarrow X$ for the open immersion. For $F$ an abelian sheaf on $(Sch/U)_{\acute{e}t}$, the Leray sequence gives
    \[H_{\acute{e}t}^{p}(X,R^q j_*F) \implies H_{\acute{e}t}^{p+q}(U,F) \]
    yielding when applied to $F:=\BG_{m,U}$
    \[0 \to H^1_{\acute{e}t}(X, j_*\BG_{m,U} ) \to H^1_{\acute{e}t}(U,\BG_{m,U}) \to H^0_{\acute{e}t}(X,R^1j_*\BG_{m,U}).\]
By \cite[Theorem 11.6]{KatoToric}, $M_X^{\mathrm{gp}} \simeq j_*\mathcal O_U^\times$. It follows that
 \[0 \to H^1_{\acute{e}t}(X, \BG_{m,log} ) \to H^1_{\acute{e}t}(U,\BG_{m,U}) \to H^0_{\acute{e}t}(X,R^1j_*\BG_{m,U}).\]
   By \cite[Theorem 12.6.38]{Romero}, since $X$ is log regular, we have $R^1j_*\mathcal{O}_U^{\times}=0$. This concludes the proof.
\end{proof}

\begin{lemma}\label{mu_n}
    Let $X$ be a log regular log scheme. Let $U$ be the dense open subset of $X$ with trivial log structure. Then the restriction morphism
    \[H^1_{klf}(X,\mu_n) \to H^1_{fppf}(U,\mu_n)\]
    is an isomorphism.
\end{lemma}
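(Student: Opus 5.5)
We compare the two Kummer sequences. On $U$, the fppf Kummer sequence $0\to\mu_n\to\BG_m\xrightarrow{n}\BG_m\to 0$ gives
\[
0\to \Gamma(U,\mathcal O_U^\times)/n \to H^1_{fppf}(U,\mu_n)\to H^1_{fppf}(U,\BG_m)[n]\to 0 .
\]
On $X$, we use the logarithmic Kummer sequence $0\to\mu_n\to\BG_{m,\log}\xrightarrow{n}\BG_{m,\log}\to 0$. This sequence is exact on the klf site because $n$-th roots of sections of $M^{gp}$ exist Kummer log flat locally (cf. \cite{Kato}). It gives
\[
0\to \Gamma(X,M_X^{gp})/n \to H^1_{klf}(X,\mu_n)\to H^1_{klf}(X,\BG_{m,\log})[n]\to 0 .
\]
Restriction to $U$, where $\BG_{m,\log}=\BG_m$, maps the second sequence to the first. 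By the five lemma it therefore suffices to show that the two outer vertical maps are isomorphisms.

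\textbf{Left term.} By \cite[Theorem 11.6]{KatoToric}, used already in Lemma \ref{Gmlog}, we have $M_X^{gp}\simeq j_*\mathcal O_U^\times$. Hence $\Gamma(X,M_X^{gp})=\Gamma(U,\mathcal O_U^\times)$, and the left vertical map is an isomorphism.

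\textbf{Right term.} We need $H^1_{klf}(X,\BG_{m,\log})\simeq H^1_{fppf}(U,\BG_m)$. On $U$, Hilbert 90 gives $H^1_{fppf}(U,\BG_m)=H^1_{\acute et}(U,\BG_m)$. Lemma \ref{Gmlog} identifies this with $H^1_{\acute et}(X,\BG_{m,\log})$, and Lemma \ref{et=fppf} identifies that with $H^1_{fppf}(X,\BG_{m,\log})$. It remains to show that the map from strict fppf to klf cohomology,
\[
H^1_{fppf}(X,\BG_{m,\log})\to H^1_{klf}(X,\BG_{m,\log}),
\]
is an isomorphism. Kato's theorem (\cite{Kato}, Theorem 4.1 there, also in Niziol) states exactly this: $R^1\varepsilon_*\BG_{m,\log}=0$ for $\varepsilon$ from the klf site to the classical fppf site. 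This is the log analogue of Hilbert 90. To reprove it, we would check stalks at a strictly henselian $X$ with chart $P$. A klf $\BG_{m,\log}$-torsor trivializes on a Kummer cover $X\times_{\mathbb Z[P]}\mathbb Z[P^{1/m}]$. The descent datum then reduces to a $\mu_m$-type cocycle with values in $M^{gp}$ of the cover, and this vanishes because $\overline M^{gp}$ is torsion free. Finally, all the identifications above are compatible with restriction to $U$, so the right vertical map is the composite isomorphism.

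\textbf{Main obstacle.} The hard step is this comparison $H^1_{fppf}(X,\BG_{m,\log})\simeq H^1_{klf}(X,\BG_{m,\log})$, since it is where log flat descent is essential. We would cite Kato's log Hilbert 90 for it. The remaining work is checking that the diagram between the two Kummer sequences commutes, and that the $[n]$-torsion parts correspond under the identifications.
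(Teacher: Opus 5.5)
Your proposal is correct and follows essentially the same route as the paper. The paper also maps the klf log Kummer sequence for $\BG_{m,\log}$ on $X$ to the fppf Kummer sequence on $U$. It gets the left vertical isomorphism from $M_X^{gp}\simeq j_*\mathcal O_U^\times$ (Kato, Theorem 11.6), and the right one from the same chain $H^1_{klf}(X,\BG_{m,\log})\simeq H^1_{fppf}(X,\BG_{m,\log})\simeq H^1_{\acute{e}t}(X,\BG_{m,\log})\simeq H^1_{\acute{e}t}(U,\BG_m)\simeq H^1_{fppf}(U,\BG_m)$, citing Niziol (Theorem 3.20) for the log Hilbert 90 step you flag as the main obstacle; your sketched reproof of that step is only heuristic, but since you cite it there is no gap.
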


\begin{proof}

 By \cite[Proposition 4.2]{katoLogStructuresII}, we have an exact sequence on $(fs/X)_{klf}$ 
 \[0 \to \mu_n \to \BG_{m,log} \xrightarrow{n} \BG_{m,log} \to 0.\]

 We deduce from the associated log exact sequence a diagram
\[
\begin{tikzcd}[column sep=large, row sep=large]
0 \arrow[r]
& \mathbf \BG_{m,\log}(X)/n \arrow[r] \arrow[d]
& H^1_{\mathrm{klf}}(X,\mu_n) \arrow[r] \arrow[d]
& H^1_{klf}(X,\mathbf \BG_{m,\log})[n] \arrow[r] \arrow[d]
& 0
\\
0 \arrow[r]
& \mathbf \BG_{m}(U)/n \arrow[r]
& H^1_{\mathrm{fppf}}(U,\mu_n) \arrow[r]
& H^1_{fppf}(U,\mathbf \BG_{m})[n] \arrow[r]
& 0
\end{tikzcd}
\]
The left vertical arrow is an isomorphism by \cite[Theorem 11.6]{KatoToric} and the right vertical arrow is an isomorphism because $H^1_{klf}(X,\BG_{m,log})\simeq H^1_{fppf}(X,\BG_{m,log})\simeq H^1_{\acute{e}t}(X,\BG_{m,log})\simeq H^1_{\acute{e}t}(U,\BG_{m}) \simeq H^1_{fppf}(U,\mathbb{G}_m)$, where the first isomorphism follows from \cite[Theorem 3.20]{Niziol}, the second by Lemma \ref{et=fppf} and the third by Lemma \ref{Gmlog}.
 
\end{proof}

We recall the following definition from \cite[Definition 2.4]{AbrOV}:
\begin{definition}[Linearly reductive group]
Let $X$ be a scheme, and let $G$ be an $X$-group scheme. We say that $G$ is linearly reductive if the functor
\begin{align*}
    \mathrm{QCoh}^G(X) &\to \mathrm{QCoh}(X)\\
    F &\mapsto F^G
\end{align*}
is exact.
\end{definition}

\begin{remark}
When $X=\Spec k$ for a field $k$, the category $ \mathrm{QCoh}(X)$ of quasi-coherent sheaves on $X$ is equivalent to the category of $k$-vector spaces, while the category $\mathrm{QCoh}^G(X)$ of quasi-coherent sheaves on $X$ endowed with an action of $G$ is equivalent to the category of $k$-representations of $G$. The definition above is therefore equivalent to the classical notion stating that every finite-dimensional representation of $G$ is semisimple.
\end{remark}

For completeness, we recall the following description of torsors due to Olsson (cf. \cite{Olsson}). Let $X$ be a local strictly Henselian scheme with a log structure and let $G$ be a finite flat linearly reductive group scheme over $X$. Let $Y \to X$ be a log $G$-torsor. By \cite[Lemma 2.20]{AbrOV}, there exists a group extension
\begin{equation*}
0 \longrightarrow \Delta \longrightarrow G \xrightarrow{p} H \longrightarrow 0
\end{equation*}
where $\Delta$ is a diagonalizable group and $H$ a constant group with invertible order. Therefore, from the log $G$-torsor $Y \to X$, we get the data:
\begin{itemize}
    \item A log $\Delta$-torsor $Y \to Y/\Delta$,
    \item a log $H$-torsor $Y/\Delta \to X$,
    \item a morphism of log $\Delta$-torsors $\chi_Y: G \times^{\Delta} Y \longrightarrow H \times_X Y, (g,y) \mapsto (p(g),gy)$ over $H \times_X (Y/\Delta)$.
\end{itemize}
Given $G$ as above, Olsson then defines the category:

\noindent \textbf{Definition 2.5.} Let $\mathcal{C}$ be the category whose objects are triples $(T \to X, Y \to T, \chi_Y)$ where:
\begin{enumerate}
    \item $T \to X$ is a log $H$-torsor.
    \item $Y \to T$ is a log  $\Delta$-torsor.
    \item $\chi_Y : G \times^{\Delta} Y \to H \times_X Y$ is a morphism of log $\Delta$-torsors over $H \times_X T$.
    \item The diagram
    \[
\begin{array}{ccc}
(G \times^{\Delta} G) \times^{\Delta} Y 
& \xrightarrow{m \times \mathrm{id}} 
& G \times^{\Delta} Y \\
\| && \| \\
G \times^{\Delta} (G \times^{\Delta} Y) 
& \xrightarrow{\mathrm{id} \times \chi_Y} 
& G \times^{\Delta} Y
\end{array}
\]
    commutes, where $m$ is the multiplication in $G$.
\end{enumerate}

\bigskip
\begin{theorem}[\cite{Olsson}] \label{Ols}
\begin{align*}
\mathrm{Tor}_{\mathrm{klf}}(X, G) &\longrightarrow \mathcal{C} \\
Y &\longmapsto (Y/\Delta \to X, \, Y \to Y/\Delta, \, \chi_Y)
\end{align*}
is an equivalence of categories.
\end{theorem}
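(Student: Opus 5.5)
The plan is to construct a quasi-inverse to the functor $Y \mapsto (Y/\Delta \to X,\, Y \to Y/\Delta,\, \chi_Y)$, in the spirit of descent for torsors under an extension of groups. The proof has two parts: the functor is well defined, and a triple can be glued back into a $G$-torsor.

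\textbf{Well-definedness.} First I check that the quotient $Y/\Delta$ exists as a log scheme and that $Y/\Delta \to X$ is a log $H$-torsor. The question is klf local on $X$, so we may assume $Y \simeq G_X$ after a klf cover. In that case $Y/\Delta \simeq G/\Delta \simeq H$, and this identification is compatible with the $G$-action, which factors through $p$ on the quotient. The quotient sheaf $Y/\Delta$ on $(fs/X)_{klf}$ is then a klf sheaf that is klf-locally isomorphic to the $H$-torsor $H_X$. Since $H$ is finite (and étale, being constant of invertible order), it is representable by effective descent of finite objects, using that klf descent is effective for finite strict morphisms (Kato). Similarly, $Y \to Y/\Delta$ is klf-locally $G \to G/\Delta$, which is a $\Delta$-torsor. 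The map $\chi_Y$ is checked to be a morphism of log $\Delta$-torsors over $H\times_X(Y/\Delta)$ by the same local computation. The cocycle diagram in condition (4) commutes because it is induced by associativity of the $G$-action on $Y$. Functoriality is clear.

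\textbf{Quasi-inverse.} Given a triple $(T, Y, \chi_Y)$, I put a $G$-action on $Y$ as follows. For a point $g$ of $G$ and $y$ of $Y$, the element $\chi_Y(g,y)\in H\times_X Y$ has the form $(p(g), y')$, and I set $g\cdot y := y'$. Condition (3) makes this compatible with $\Delta$ and with the projection to $T$, and condition (4) gives associativity. To show $Y\to X$ is then a klf $G$-torsor, I pass to a klf cover of $X$ trivializing $T$, and refine it so that $Y$ becomes trivial over each component $hX$ of $T\simeq H_X$. The action map $G\times_X Y\to Y\times_X Y$ is then identified with an isomorphism, since it is an isomorphism on $\Delta$-orbits and compatible with the free transitive $H$-action on components. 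Finally I check that the two composites are naturally isomorphic to the identities. For $Y\mapsto$ triple $\mapsto Y$, the action recovered from $\chi_Y$ is the original one by the definition of $\chi_Y$. For the reverse composite, $Y/\Delta \simeq T$ canonically, because $Y\to T$ is a $\Delta$-torsor, which forces this identification.

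\textbf{Main obstacle.} I expect the hardest step to be the representability and descent issues in the fs log category: forming quotients such as $Y/\Delta$, $G\times^\Delta Y$ and $(G\times^\Delta G)\times^\Delta Y$ as fs log schemes rather than just klf sheaves. I would handle this by working throughout with klf sheaves. The equivalence of categories then holds at the level of sheaf torsors, and representability of each object is deduced from the finiteness of $G$, $\Delta$ and $H$ together with Kato's effective descent for finite Kummer log flat coverings.
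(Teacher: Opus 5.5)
The paper gives no proof of this statement; it is quoted from \cite{Olsson}. Strict henselianity of $X$ is used only to produce the extension $1\to\Delta\to G\to H\to 1$ via \cite[Lemma 2.20]{AbrOV}, so there is no in-paper argument to compare with.

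Your outline is the standard argument for torsors under an extension of sheaves of groups on $(fs/X)_{klf}$, and its structure is sound: it uses nothing about $X$, or about linear reductivity, beyond the extension itself. The step where you justify representability, however, is wrong as stated. Klf descent is not effective for finite strict objects in general:

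\begin{enumerate}
\item The descended object is usually not strict. In Example \ref{fiber prod}, $Y\to X$ is not strict, although its pullback $Y\times_X Y\simeq Y\times\mu_r$ is strict over $Y$.
\item For descent data that do not come from torsors, descent can fail outright. The closed point $V(T)\subset\Spec R[T]/(T^r-\pi)$ is $\mu_r$-stable but does not descend to $\Spec R$. The reason is that a Kummer base change of that cover, over any log scheme lying over the closed point, has fibres of length at least $r$.
\end{enumerate}

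What you actually need is the representability of klf sheaf torsors under finite flat group schemes, \cite[Theorem 2.1]{GillibertTame}; the paper itself invokes this in the proof of Theorem \ref{equi}. Apply it to $Y/\Delta$ as an $H$-torsor over $X$. Apply it also to $G\times^{\Delta}Y$ and $(G\times^{\Delta}G)\times^{\Delta}Y$, as $\Delta$-torsors over $H\times_X T$ and $H\times_X H\times_X T$ respectively. In the quasi-inverse direction no representability is needed. There $Y$ is given as a log scheme, and the action $G\times_X Y\to Y$ is a morphism of klf sheaves between representable objects.

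There are also two smaller points.

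First, condition (4) only gives associativity of $a=\mathrm{pr}_Y\circ\chi_Y$; you must also check the unit axiom. It follows formally. Over $\{1\}\times T$, $\chi_Y$ restricts to an automorphism of the $\Delta$-torsor $Y\to T$, i.e. translation by some $\delta_0\in\Delta(T)$. Associativity forces $\delta_0^2=\delta_0$, hence $\delta_0=1$. This is also what shows that the new $G$-action restricts on $\Delta$ to the given one.

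Second, you should make two conventions explicit. $\Delta$ acts on $G\times^{\Delta}Y$ through left multiplication on $G$. One of the two structure maps to $H\times_X T$ must involve the $H$-action on $T$, because the point $g\cdot y$ lies over $p(g)\cdot t$, not over $t$. With the naive structure maps, your claim that condition (3) makes the action compatible with the projection to $T$ would be false.
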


We can now prove our main result of this section:

\begin{theorem}\label{lin-red}
    Let $X$ be a log regular log scheme and let $U$ be the dense open subset of $X$ with trivial log structure. Let $G$ be a finite flat linearly reductive $X$-group scheme. Then the restriction functor
\[
\mathrm{Tor}_{\mathrm{klf}}(X,G)
\;\longrightarrow\;
\mathrm{Tor}_{\mathrm{fppf}}(U,G)
\]
    is an equivalence of categories.
\end{theorem}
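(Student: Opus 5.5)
The plan is to reduce, by étale descent on $X$, to the strictly henselian local case, and there to use Olsson's description (Theorem \ref{Ols}) to split a $G$-torsor into an $H$-torsor and a $\Delta$-torsor, each handled by an earlier purity result.

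\textbf{Step 1: reduction to the strictly henselian local case.} Both sides of the restriction functor are stacks for the étale topology on $\underline{X}$. On the left, klf torsors under a finite flat group satisfy étale descent. On the right, the assignment $V\mapsto \mathrm{Tor}_{fppf}(V\cap U,G)$ is a stack on $X_{\acute{e}t}$. Log regularity is étale local, so it suffices to prove:
\begin{itemize}
\item full faithfulness in general;
\item essential surjectivity étale locally on $X$, compatibly with the uniqueness coming from full faithfulness, so that local extensions glue.
\end{itemize}

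\textbf{Step 2: full faithfulness.} Isomorphisms between two $G$-torsors $Y,Y'$ are sections of the twisted form $\mathrm{Isom}_G(Y,Y')$, which is itself a klf torsor-type object, finite over $X$ klf-locally. I would reduce to the following claim: for a klf sheaf $F$ represented klf-locally by a finite flat $X$-scheme, the map $F(X)\to F(U)$ is injective. Equivalently, a morphism between log torsors that agrees on $U$ agrees everywhere.

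I would argue this klf-locally on a Kummer cover $X'\to X$ that is again log regular, for instance $\Spec \mathbb Z[P^{1/n}]$-type covers over a chart. There $U'$ is schematically dense, because log regular schemes are normal and $U'$ is dense. Injectivity then follows from separatedness and flatness of $G$. Surjectivity on morphisms is then a consequence of the extension statement applied to $\mathrm{Isom}$, or can be deduced from the local case by the same decomposition as in Step 3.

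\textbf{Step 3: essential surjectivity over a strictly henselian local $X$.} By \cite[Lemma 2.20]{AbrOV} there is an extension $0\to\Delta\to G\to H\to 0$, with $\Delta$ diagonalizable and $H$ constant of invertible order. Given an fppf $G$-torsor $P\to U$, form:
\begin{itemize}
\item the $H$-torsor $P/\Delta\to U$;
\item the $\Delta$-torsor $P\to P/\Delta$;
\item the map $\chi_P$.
\end{itemize}
Corollary \ref{inv} extends $P/\Delta$ uniquely to a két $H$-torsor $T\to X$. Then $T$ is a finite Kummer log étale cover of a log regular scheme, hence itself log regular, with trivial locus exactly $P/\Delta$.

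Next write $\Delta\simeq\prod\mu_{n_i}$, which is possible after shrinking, since $X$ is strictly henselian. Lemma \ref{mu_n} applied on $T$ (after étale localization on $T$ if needed) extends $P\to P/\Delta$ uniquely to a klf $\Delta$-torsor $Y\to T$. The morphism $\chi_P$ is a morphism of $\Delta$-torsors over $H\times_X(P/\Delta)$, and this space is the trivial locus of the log regular $H\times_X T$. By full faithfulness for $\Delta$-torsors (Lemma \ref{mu_n} at the level of $\mathrm{Hom}$, i.e. $H^0$ injectivity and the $H^1$ bijection), $\chi_P$ extends uniquely to $\chi_Y$. The cocycle diagram (4) of Definition 2.5 commutes because it commutes on the dense trivial locus, again using the injectivity from Step 2. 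Theorem \ref{Ols} then yields a klf $G$-torsor on $X$ restricting to $P$.

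\textbf{Main obstacle.} The main difficulty is the bookkeeping in Step 3: ensuring that $T$ and $H\times_X T$ are log regular with the correct trivial loci, and that Lemma \ref{mu_n}, stated for $H^1$, also gives uniqueness of morphisms of $\Delta$-torsors. For the latter I would use that $\mathrm{Hom}$ of $\Delta$-torsors is a $\Delta$-torsor, and that $\Delta(T)=\mu_n(T)\to\mu_n(T\cap U)$ is bijective: this holds because $T$ is normal and $\mu_n$ is finite over it, so sections over the dense open extend.
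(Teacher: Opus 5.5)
Your proposal is correct and follows essentially the same route as the paper. Full faithfulness comes from separatedness and normality: sections of a finite scheme over the dense trivial locus extend, and then descend. Essential surjectivity reduces to the strictly henselian case and splits $G$ into $\Delta$ and $H$. The $H$-torsor extends by Corollary~\ref{inv}, the $\Delta$-torsor extends by Lemma~\ref{mu_n} on the log regular $T$, $\chi$ extends by full faithfulness on $H\times_X T$, and Theorem~\ref{Ols} concludes. Your extra details (the $H^0$/$H^1$ argument for morphisms of $\Delta$-torsors, and checking the cocycle condition on the dense trivial locus) only make explicit what the paper leaves implicit.
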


\begin{proof} 
In \cite[Proposition 4.3]{GillibertTame}, it was proved that when $G$ is finite flat over $X$, the functor 
\[
\mathrm{Tor}_{\mathrm{klf}}(X,G)
\;\longrightarrow\;
\mathrm{Tor}_{\mathrm{fppf}}(U,G).
\]
is fully faithful when $X$ is regular. The proof adapts straightforwardly to the log regular case, which we briefly summarize here. By standard descent arguments, the proof reduces to showing that the restriction map $G(X)\longrightarrow G(U)$ is bijective. Injectivity follows from the fact that $G$ is separated. On the other hand, since $X$ is log regular, it is normal by \cite[Theorem 4.2]{KatoToric}, hence any section $U \to G$ extends to a section $X \to G$ by \cite[Corollary 6.1.14]{Grothendieck1961EGAII}.

Essential surjectivity relies on Lemma \ref{mu_n} and the description in Theorem \ref{Ols}. By \'etale descent, we may assume that $X$ is local and strictly henselian. An fppf $G$-torsor $Z \to U$ decomposes as an fppf $\Delta$-torsor $Z \to Z/\Delta$ and an \'etale $H$-torsor $Z/\Delta \to U$. To extend the fppf $G$-torsor $Z \to U$ over $X$, we show that it suffices to extend each of the two torsors in this decomposition.
Since $X$ is log regular, the \'etale $H$-torsor $Z/\Delta \to U$ extends uniquely to a Kummer log \'etale $H$-torsor $T \to X$ by Corollary \ref{inv}. Then $T \to X$ is log \'etale by klf descent and hence $T$ is log regular by \cite[Theorem 8.2]{KatoToric}. By Lemma \ref{mu_n}, the $\Delta$-torsor $Z \to Z/\Delta$ extends uniquely to a klf $\Delta$-torsor $Y \to T$. By Theorem \ref{Ols}, it is left to show that the morphism $\chi_Z$ also extends. The restriction map
\[\mathrm{Tor}_{kfl}(H \times T, \Delta) \to \mathrm{Tor}_{fppf}(H_U \times T_U, \Delta)\]
is fully faithful as argued previously given that $\Delta$ is finite flat and $H \times_X T$ is log regular (it is even an equivalence of categories).
\end{proof}

\subsection{Moduli of torsors on nodal curves}
Let $S$ be a log regular log scheme and let $X/S$ be a log curve as in 
\cite[Definition~2.4.1.1]{molcho_wise_2022}. In particular, $X \to S$ is log smooth. 
Let $U \subset S$ be the dense open subset where the log structure is trivial, so that 
$X_U \to U$ is smooth.

In \cite{saraT}, the authors studied the problem of extending $G$-fppf torsors from $X_U$ to $X$, and related this question to extending some group homomorphisms into the \textit{logarithmic Jacobian} of $X/S$, as defined in \cite{molcho_wise_2022} and \cite{HMOP}. 
More precisely, they prove the following.

\begin{theorem}\cite[Corollary 3.14]{saraT}
Let $X/S$ be a log curve and assume that $S$ is log regular with $U \subset S$ the dense open with trivial log structure. Let $G$ be a finite flat group scheme over $S$ such that its Cartier dual $G^D$ is \'etale. 
Then there is an isomorphism
\[
H^1_{\mathrm{klf}}(X,G) \big/ H^1_{\mathrm{klf}}(S,G)
\;\simeq\;
H^1_{\mathrm{fppf}}(X_U,G) \big/ H^1_{\mathrm{fppf}}(U,G).
\]
\end{theorem}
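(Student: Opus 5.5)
The plan is to compare the two quotients through the Leray spectral sequences for $f\colon X\to S$ and $f_U\colon X_U\to U$. Being a quotient by the base contribution, each side should be exactly the part of $H^0$ of the relative $R^1$ that comes from torsors on the total space. Since $G^D$ is \'etale, $G$ is of multiplicative type. Working \'etale locally on $S$ and using \'etale descent for both sides, I would first reduce to $G=\mu_n$ or to finite products of such; the general case then follows by twisting, since all constructions below are functorial in $G^D$.

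\textbf{Step 1 (Leray on $X$).} I use the Leray spectral sequence for $f$ on the klf sites, $H^p_{klf}(S,R^qf_*G)\Rightarrow H^{p+q}_{klf}(X,G)$. A log curve has geometrically connected, reduced fibres, so $f_*\mathcal O_X=\mathcal O_S$ and hence $f_*G=G$. This gives
\[0\to H^1_{klf}(S,G)\to H^1_{klf}(X,G)\to H^0(S,R^1f_*G)\to H^2_{klf}(S,G)\to H^2_{klf}(X,G).\]
The same argument in the fppf topology gives the analogous sequence over $U$. If $f$ has a section (or one works \'etale locally on $S$, where sections through the smooth locus exist), the last map is injective. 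Therefore $H^1(X,G)/H^1(S,G)\simeq H^0(S,R^1f_*G)$, and likewise $H^1(X_U,G)/H^1(U,G)\simeq H^0(U,R^1f_{U*}G)$. For the global statement I would keep track of the images rather than assume sections, and compare the $H^2$ terms via the fact that $H^2_{klf}(S,G)\to H^2_{fppf}(U,G)$ is compatible with both sequences.

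\textbf{Step 2 (identify $R^1f_*\mu_n$).} I apply $f_*$ to the exact sequence $0\to\mu_n\to\BG_{m,\log}\xrightarrow{n}\BG_{m,\log}\to0$ of \cite[Proposition 4.2]{katoLogStructuresII}. Here $f_*\BG_{m,\log}$ differs from $\BG_{m,\log,S}$ only by a torsion-free piece that is $n$-divisible up to the usual bookkeeping. The upshot is that $R^1f_*\mu_n$ is identified with $\mathrm{LogPic}_{X/S}[n]$, and more generally $R^1f_*G\simeq \mathcal{H}om(G^D,\mathrm{LogPic}_{X/S})$, using $R^1f_*\BG_{m,\log}=\mathrm{LogPic}$ as in \cite{molcho_wise_2022}. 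The degree map kills torsion, so this is $\mathcal Hom(G^D,\mathrm{LogJac}_{X/S})$. Over $U$ the same argument gives $\mathcal Hom(G^D,\mathrm{Jac}_{X_U/U})$.

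\textbf{Step 3 (extension, the main obstacle).} It remains to show that restriction
\[\mathrm{Hom}_S(G^D,\mathrm{LogJac}_{X/S})\to\mathrm{Hom}_U(G^D,\mathrm{Jac}_{X_U/U})\]
is bijective, and this is where I expect the difficulty.

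Injectivity follows because $G^D$ is \'etale and $\mathrm{LogJac}$ is separated, with $U$ dense in the normal base $S$.

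For surjectivity I would use the valuative and N\'eron-type extension property of the logarithmic Jacobian over a log regular base. After \'etale localization $G^D$ is constant, so a homomorphism amounts to finitely many $n$-torsion points of $\mathrm{Jac}_{X_U}(U)$. Each such point extends over codimension-one points of $S$: there the tropical description of $\mathrm{LogJac}$ (lattice-valued twists of the dual graph) provides the extension. The extension across higher codimension then follows by normality and log regularity, via purity for the finite \'etale group scheme $\mathrm{LogJac}[n]$ in the Kummer sense, in the spirit of Theorem \ref{logetpur}. Finally, the extended points still define a homomorphism because the relations hold on the dense $U$ and $\mathrm{LogJac}$ is separated.
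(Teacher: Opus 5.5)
Your architecture (Leray for $f$, then $R^1f_*G\simeq\mathcal{H}om(G^D,\mathrm{LogPic}_{X/S})$, then extending homomorphisms into the log Jacobian) is in the spirit of the original argument in \cite{saraT}. As written, however, the step you yourself flag as the heart is not proved.

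\textbf{Extension across codimension two.} To pass from codimension one to all of $S$ you invoke ``purity for the finite \'etale group scheme $\mathrm{LogJac}[n]$ in the spirit of Theorem~\ref{logetpur}''. This fails when the residue characteristic $p$ divides $n$, which is exactly the case where the statement goes beyond tame/Kummer \'etale theory (e.g.\ $G=\mu_p$, whose dual $G^D=\mathbb{Z}/p$ is \'etale). There the relevant object $\mathrm{Hom}(G^D,\mathrm{LogJac}_{X/S})=\mathrm{LogJac}_{X/S}[p]$ is not \'etale: for genus $g\ge 1$, already $\mathrm{Jac}_{X_U/U}[p]$ has an infinitesimal part. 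So Theorem~\ref{logetpur} gives nothing. Moreover, $\mathrm{LogJac}[n]$ is not a finite $S$-scheme, so normality of $S$ does not by itself extend its sections across codimension two.

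\textbf{Extension in codimension one.} This step is only asserted. Over a DVR with its fs divisorial log structure you must either produce the extension without base change, or work klf-locally and descend.

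\textbf{A minor point in Step 2.} The input you need is $f_*\BG_{m,\log}=\BG_{m,\log,S}$ for a log curve. A nonzero torsion-free cokernel would typically not be $n$-divisible, and it would add a term to $R^1f_*\mu_n$.

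\textbf{Local-to-global bookkeeping.} The quotient $H^1_{klf}(X,G)/H^1_{klf}(S,G)$ is not \'etale-local on $S$. So neither ``reduce to $\mu_n$ \'etale locally'' nor ``sections exist \'etale locally'' applies to it. Without a section, the quotient equals $\ker\bigl(d_2\colon H^0(S,R^1f_*G)\to H^2_{klf}(S,G)\bigr)$. To lift a class in $\ker d_2^U$ you need that $d_2(b)|_U=0$ implies $d_2(b)=0$. That is, you need injectivity of $H^2_{klf}(S,G)\to H^2_{fppf}(U,G)$, at least on the image of $d_2$. This is a Brauer-type log purity statement, and mere compatibility of the two spectral sequences does not supply it. The standard way around this is to argue at the level of stacks of torsors: full faithfulness of restriction lets local extensions glue uniquely.

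\textbf{How the paper handles it.} The paper does not reprove this cited result. It is, however, the multiplicative-type case of Corollary~\ref{logcurve}, whose proof avoids Jacobians, Leray and $H^2$ entirely.
\begin{enumerate}
\item Since $X$ is log smooth over the log regular $S$, it is log regular \cite[Theorem 8.2]{KatoToric}, and its locus of trivial log structure is $X_U$.
\item $G$, being of multiplicative type, is linearly reductive.
\item Theorem~\ref{lin-red}, applied to $X$ and to $S$, then gives compatible bijections $H^1_{klf}(X,G)\simeq H^1_{fppf}(X_U,G)$ and $H^1_{klf}(S,G)\simeq H^1_{fppf}(U,G)$, already before taking quotients.
\end{enumerate}
For $G=\mu_n$ the input is Lemma~\ref{mu_n} applied to the total space $X$: the Kummer sequence, $M_X^{gp}\simeq j_*\mathcal{O}_{X_U}^\times$, and $R^1j_*\BG_m=0$. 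This treats $p\mid n$ with no \'etaleness required. If completed, your route would additionally identify the quotient with a group of homomorphisms $G^D\to\mathrm{LogJac}_{X/S}$, but that identification is not needed for the isomorphism.
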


Group schemes satisfying the hypothesis of the theorem are essentially diagonalizable 
group schemes. In what follows, we extend this result to linearly reductive group schemes.

\begin{corollary}\label{logcurve}
Let $X/S$ be a log curve and assume that $S$ is log regular with $U \subset S$ the dense open with trivial log structure. Let $G$ be a finite flat linearly reductive group scheme over $S$. Then the restriction morphism
\[
H^1_{\mathrm{klf}}(X,G) \big/ H^1_{\mathrm{klf}}(S,G)
\;\longrightarrow\;
H^1_{\mathrm{fppf}}(X_U,G) \big/ H^1_{\mathrm{fppf}}(U,G)
\]
is an isomorphism.
\end{corollary}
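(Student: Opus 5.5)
The plan is to reduce to the diagonalizable case treated in \cite[Corollary 3.14]{saraT} and the étale case of invertible order, following the pattern of the proof of Theorem \ref{lin-red}. Since the statement is about $H^1$ modulo pullbacks from the base, I will work étale locally on $S$ and then glue. Étale locally on $S$, \cite[Lemma 2.20]{AbrOV} gives an extension $0 \to \Delta \to G \to H \to 0$ with $\Delta$ diagonalizable and $H$ constant of invertible order. Gluing is possible because the local statements are canonical, being induced by restriction. Alternatively, I could invoke Theorem \ref{Ols} directly on strict henselizations of $X$ and descend, so I first reduce to $S$ strictly local.

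\textbf{Injectivity.} The key input is that $X$ itself is log regular, since $X \to S$ is log smooth over a log regular base \cite[Theorem 8.2]{KatoToric}. Its trivial locus is $X_U$ only up to the log structure along the nodes and markings; I will take $X^\circ \subset X_U$ to be the locus of trivial log structure. Applying Theorem \ref{lin-red} to $X$ and to $S$, we get $H^1_{klf}(X,G) \simeq H^1_{fppf}(X^\circ,G)$ and $H^1_{klf}(S,G)\simeq H^1_{fppf}(U,G)$. The remaining task is to compare $X^\circ$ with $X_U$, i.e. to handle the marked sections. If the curve has no markings, then $X^\circ = X_U$, and the corollary follows immediately from Theorem \ref{lin-red}, even before taking quotients.

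\textbf{Markings.} With markings, $X^\circ \subsetneq X_U$, so a torsor on $X_U$ may not extend. Here I will mimic the proof of Theorem \ref{lin-red}: decompose a torsor $Z \to X_U$ into an étale $H$-torsor $Z/\Delta \to X_U$ and a $\Delta$-torsor over it. I extend the $H$-part by Corollary \ref{inv} to a Kummer log étale $T \to X$. Then $T \to S$ is again a log curve (after normalization), because Kummer log étale covers of log curves are log curves. I extend the $\Delta$-part over $T$ using \cite[Corollary 3.14]{saraT} applied to $T/S$ (a product of $\mu_n$'s, whose Cartier dual is étale). Finally, I extend the gluing datum $\chi$ by the full faithfulness argument on $H \times T$, and conclude with Theorem \ref{Ols}.

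\textbf{Main obstacle.} I expect the hard step to be the bookkeeping of the quotient by $H^1(S,G)$ at the $\Delta$-stage. The $\Delta$-torsor on $T_U$ extends only modulo classes pulled back from the base, and those come from $T$'s Stein factorization rather than from $S$. I would first try to absorb the ambiguity by twisting $Z$ by a torsor pulled back from $U$, which after Theorem \ref{lin-red} for $S$ extends over $S$. Surjectivity of the final map then follows, and injectivity follows from Theorem \ref{lin-red} plus full faithfulness.
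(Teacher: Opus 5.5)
Your no-markings argument is exactly the paper's proof. $X$ is log regular by \cite[Theorem 8.2]{KatoToric}, and its locus of trivial log structure is $X_U$. Theorem \ref{lin-red} applied to $X$ and to $S$ then gives bijections $H^1_{\mathrm{klf}}(X,G)\simeq H^1_{\mathrm{fppf}}(X_U,G)$ and $H^1_{\mathrm{klf}}(S,G)\simeq H^1_{\mathrm{fppf}}(U,G)$, compatible with pullback, hence the bijection of quotients. That is the entire proof: the statement, and the paper's one-line proof, presuppose that the log structure of $X$ is trivial exactly over $U$. In other words the log curve carries no markings, so $X^\circ=X_U$.

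The rest of your proposal should be discarded, because it is not only unnecessary but wrong in two places.

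First, the opening reduction to strictly local $S$ does not work for these quotient sets. You justify it by saying the local statements are canonical and hence glue, but local extensions are only determined up to twists by classes coming from the base. So they are not canonical and need not glue. \'Etale-local reduction is legitimate for the category-level equivalence, and that is already carried out inside the proof of Theorem \ref{lin-red}. You can simply apply that theorem to $X$ and $S$ globally.

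Second, your treatment of markings misidentifies the obstruction and aims at a false statement. Every fppf torsor on $X_U$ does extend: restrict it to $X^\circ$ and apply Theorem \ref{lin-red}. What fails is that a klf torsor on $X$ may ramify along a marking, so restriction does not land in $H^1_{\mathrm{fppf}}(X_U,G)$. Concretely, take $X=\mathbb{P}^1_k$ marked at $0$ and $\infty$ over $S=U=\Spec k$, with $k$ algebraically closed and $G=\mu_n$. Lemma \ref{mu_n} gives $H^1_{\mathrm{klf}}(X,\mu_n)\simeq H^1_{\mathrm{fppf}}(\mathbb{G}_m,\mu_n)\simeq\mathbb{Z}/n$, while $H^1_{\mathrm{fppf}}(\mathbb{P}^1_k,\mu_n)=0=H^1_{\mathrm{fppf}}(S,\mu_n)$. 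No Stein-factorization bookkeeping or twisting by base torsors can repair this. The marked case must be excluded by hypothesis, not proved.
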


\begin{proof}
Since $X \to S$ is log smooth and $S$ is log regular, it follows that $X$ is log regular by \cite[Theorem 8.2]{KatoToric}. The result therefore follows from Theorem~\ref{lin-red}.
\end{proof}

\section{Applications}
\subsection{Torsors on root stacks}
In this section, we establish a connection between the compactification of torsors in the logarithmic setting seen in the previous section and the valuative criterion of properness for algebraic stacks \cite{root,rootbis}. To this end, we first establish an equivalence between logarithmic torsors and torsors over root stacks. The relationship between logarithmic geometry and root stacks has been studied in \cite{BorneVis,TalpoV,Olssonn}. We briefly recall the construction of root stacks and infinite root stacks of log schemes.

There is an equivalent way to define log structures using symmetric monoidal functors, which naturally leads to the stack-theoretic perspective. Namely, root stacks are constructed by modifying the logarithmic structure. We refer the reader to \cite{BorneVis} for background on symmetric monoidal categories and functors.

\begin{definition}[Deligne--Faltings structure]
A fine and saturated Deligne--Faltings structure (DF structure) on a scheme ${X}$ consists of a symmetric monoidal functor
\[
L : A \longrightarrow \mathrm{Div}_{X_{\'et}}
\]
with trivial kernel, where $A$ is a sheaf of monoids on the small \'etale site of ${X}$ whose stalks are fine and saturated. The condition of trivial kernel means that for every \'etale morphism ${U} \to {X}$, the only section of $A({U})$ whose image is isomorphic to $(\mathcal{O}_U,1)$ is the zero element.
\end{definition}

A DF structure determines a log structure on ${X}$. Indeed, one sets \[\mathcal{M}_X := A \times_{\mathrm{Div}_X} \mathcal{O}_X\] where the morphism $\mathcal{O}_X \longrightarrow [\mathcal{O}_X/\mathcal{O}_X^\times] = \mathrm{Div}_{X_{\'et}}$ is the natural quotient map. Conversely, from a log structure $\mathcal{M}_X$, one can define a DF structure by setting $A:=\overline{\mathcal{M}}_X$. We refer to \cite{BorneVis} for the equivalence of the two definitions.

\begin{definition}[Root stack of a log scheme]
Let $X$ be a logarithmic scheme. The root stack $\sqrt[n]{X}$ is the stack over schemes whose objects over a morphism $T \to \underline{X}$ consist of extensions
\[
\begin{tikzcd}
\overline{\mathcal{M}}_{X|T} \arrow[r] \arrow[d]
& \mathrm{Div}_{T_{\'et}} \\
\tfrac1n \overline{\mathcal{M}}_{X|T} \arrow[ur,dashed] &
\end{tikzcd}
\]
of the pullback DF structure $\overline{\mathcal{M}}_{X|T} \to \mathrm{Div}_{T_{\'et}}$ from $X$ to $T$ to the sheaf of monoids $\tfrac1n \overline{\mathcal{M}}_{X|T}$.

\end{definition}

Root stacks form an inverse system with respect to the natural order given by inclusion, which allows one to define:
\begin{definition}[Infinite root stack]\cite[Definition 3.3 and Proposition 3.5]{TalpoV}
The infinite root stack of $X$ is defined to be
\[
\sqrt[\infty]{X}=\varprojlim_n \sqrt[n]{X}.
\]
\end{definition}

\begin{remark}\label{niel}
 $\sqrt[n]{X}$ is a tame algebraic stack with coarse moduli space $X$ by \cite[Proposition 4.19]{BorneVis}. However, $\sqrt[\infty]{X}$ is not an algebraic stack (its diagonal is not of finite type). Nevertheless, it admits an fpqc atlas and, \'etale locally on its coarse moduli space $X$, a presentation as a quotient stack \cite[Corollary 3.13]{TalpoV}.
\end{remark}

\begin{remark}\label{rs}
When the logarithmic structure of $X$ is induced by an irreducible effective Cartier divisor $D \subseteq X$, so that $\mathcal{\overline{M}}_X$ is the constant sheaf ${\mathbb{N}}$ on $D$, the root stack $\sqrt[n]{X}$ defined above coincides with the usual root stack $\sqrt[n]{(\underline{X},D)}$ introduced in \cite{Cadman}.
\end{remark}

\begin{example}
Let $R$ be a discrete valuation ring with uniformizing parameter $\pi$ and residue field $k := R/(\pi)$. For a positive integer $n$, we denote by $\sqrt[n]{\Spec R}$ the $n$-th root stack taken with respect to the Cartier divisor $\Spec k \subset \Spec R$. It can be described as the quotient stack $\left[ \Spec R[t]/(t^n - \pi) \, / \, \mu_n \right]$, where $\mu_n$ acts on $\Spec R[t]/(t^n - \pi)$ by multiplication on $t$. On the other hand, if we endow $\Spec R$ with the divisorial log structure induced by the closed point, then the characteristic monoid is $\overline{\mathcal{M}}_{\Spec R} \simeq \mathbb{N}$ over the closed point, and the root stack of the induced log scheme is constructed via the Kummer map $\mathbb{N} \to \tfrac{1}{n}\mathbb{N}$. It is, by \cite[Corollary 3.13]{TalpoV}, isomorphic to the quotient stack $[\Spec R \times_{\Spec \mathbb{Z}[\mathbb{N}]} \Spec \mathbb{Z}[\frac{1}{n}\mathbb{N}]/\mu_n]$. This verified Remark \ref{rs} in this case.
\end{example}

\begin{theorem}\label{equi}
Let $X$ be a log scheme, and let $G$ be a finite flat group scheme over $X$. There is an equivalence of categories
\[
\mathrm{Tor}_{klf}(X,G) \equiv \mathrm{Tor}_{fppf}(\sqrt[\infty]{X},G_{})
\]
where the category on the right denotes that of fppf $G_{}$-torsors on $\sqrt[\infty]{X}$ which are representable morphisms.
\end{theorem}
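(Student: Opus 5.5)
We will deduce the equivalence from the topos-theoretic comparison of \cite{TalpoV}, namely the equivalence
\[
\Phi\colon \mathrm{Sh}(X_{klf}) \xrightarrow{\ \sim\ } \mathrm{Sh}((\sqrt[\infty]{X})_{fppf}),
\]
together with the fact that torsors are intrinsic to a topos. A $G$-torsor in a topos $\mathcal{T}$ is a sheaf $P$ with a left action of the group object $G$ such that the action map $G\times P\to P\times P$ is an isomorphism and $P\to *$ is an epimorphism. Both conditions are preserved by any equivalence of topoi, since an equivalence preserves finite limits and epimorphisms. It therefore suffices to check two things. First, $\Phi$ sends the sheaf represented by $G$ on $(fs/X)_{klf}$ (with $G$ carrying the strict log structure) to the sheaf represented by $G\times_X\sqrt[\infty]{X}$. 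Second, the torsors on each side are exactly the objects appearing in the statement.

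For the first point, we recall how $\Phi$ is built in \cite{TalpoV}. A Kummer flat log scheme $T\to X$ gives an fppf morphism of stacks $\sqrt[\infty]{T}\to\sqrt[\infty]{X}$, and strict objects are sent to their base change. Since $G\to X$ is strict, we get $\sqrt[\infty]{G}=G\times_X\sqrt[\infty]{X}$. The group structure is then transported by functoriality, because $\Phi$ commutes with finite products.

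For the second point we need representability on both sides. On the log side, a klf $G$-torsor as defined in the paper is a log scheme. Conversely, any sheaf torsor under a finite flat $G$ is representable by a log scheme: after a klf covering it is trivial, hence given by $G_{X_i}$ with a descent datum, and finite (affine) morphisms satisfy effective klf descent by \cite{katoLogStructuresII} and Kato's descent results. On the stack side, an fppf torsor on $\sqrt[\infty]{X}$ under a finite flat group is a sheaf that becomes representable by a finite morphism after an fppf cover. By fpqc descent of affine morphisms along the fpqc atlas of Remark \ref{niel}, it is therefore representable and affine over $\sqrt[\infty]{X}$. This is why the statement restricts to representable torsors. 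We must also check that morphisms of torsors correspond; this holds because $\Phi$ is fully faithful and $G$-equivariance is a condition expressed through finite limits.

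The main obstacle is the representability and descent step on the log side. Kummer flat descent for finite log schemes is subtle, since fs fiber products differ from ordinary ones, as Example \ref{fiber prod} shows. The first approach we would try is to reduce to finite level. Since $G$ is finite, of finite presentation, and $\sqrt[\infty]{X}=\varprojlim\sqrt[n]{X}$, a standard limit argument shows that any $G$-torsor on $\sqrt[\infty]{X}$ descends to some $\sqrt[n]{X}$. One may then work étale locally with the chart presentation $\sqrt[n]{X}=[X_n/\mu_{n}^P]$ of \cite[Corollary 3.13]{TalpoV}. In that setting a torsor on the quotient stack is an equivariant torsor on the Kummer cover $X_n$, and this is exactly a klf torsor on $X$ trivialized over the Kummer cover $X_n\to X$. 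Comparing these two descriptions is what we expect to require the most care.
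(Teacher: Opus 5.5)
Your overall route is the paper's. You transport sheaf torsors through the Talpo--Vistoli equivalence $\mathrm{Sh}(X_{klf})\simeq\mathrm{Sh}(\sqrt[\infty]{X}_{fppf})$ of \cite[Theorem 6.16]{TalpoV}. You identify the image of $G$ with $G\times_X\sqrt[\infty]{X}$ using that $G\to X$ is strict. Then you match sheaf torsors with representable objects on each side. Your stack-side argument is fine. As in the paper, you can simply pull $\mathcal{P}$ back along an arbitrary scheme $U\to\sqrt[\infty]{X}$: there it is an fppf $G_U$-torsor, hence representable by fppf descent of affine morphisms, so the fpqc atlas is not needed.

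The justification you give on the log side, however, is wrong as stated. Finite (even strict, closed) morphisms do not satisfy effective descent for the Kummer flat topology.

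\emph{Counterexample.} Take $n>1$ and $Y=\Spec R[T]/(T^n-\pi)\to X=\Spec R$ as in Example \ref{fiber prod}. The reduced closed point $Z\subset Y$ with its strict log structure is $\mu_n$-stable. Hence it carries a descent datum along $Y\to X$, since $Y\times^{fs}_XY\simeq Y\times\mu_n$. But $Z$ is not of the form $W\times^{fs}_XY$ for any fs log scheme $W$ over $X$. Such a $W$ would be supported at one point with residue field $k$. The coordinate ring of $W\times^{fs}_XY$ is then graded by $\mathbb{Z}/n$ with all pieces nonzero, so it has length at least $n$ over $W$, whereas $Z$ has length one.

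\emph{What you need.} The required input, and what the paper cites, is the torsor-specific representability theorem: every $G$-torsor in $\mathrm{Sh}(X_{klf})$ with $G$ finite flat is representable by a log scheme finite over $X$ \cite[Theorem 2.1]{GillibertTame}.

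\emph{Your fallback.} The route through $\sqrt[n]{X}\simeq[X_n/\mu_n^P]$ does not get around this. Identifying an equivariant torsor on $X_n$ with a klf torsor on $X$ is precisely that representability statement. In addition, descending from $\sqrt[\infty]{X}$ to a finite level requires quasi-compactness and quasi-separatedness, which the theorem does not assume. This is why the paper states that reduction as a separate proposition.

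With this citation in place, your argument is the paper's proof.
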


\begin{proof}
By \cite[Theorem 6.16]{TalpoV}, there is a continuous functor of sites $F: X_{klf} \to \sqrt[\infty]{X}_{fppf}$ that induces an equivalence of their associated topoi:
\[
F_* : \mathrm{Sh}(X_{klf}) \xrightarrow{\sim} \mathrm{Sh}(\sqrt[\infty]{X}_{fppf}).
\]
Because an equivalence of topoi preserves finite limits, group objects, and group actions, it naturally induces an equivalence between the categories of sheaf torsors:
\[
\Phi: \mathrm{Sheaf~Torsors}(X_{klf}, G) \xrightarrow{\sim} \mathrm{Sheaf~Torsors}(\sqrt[\infty]{X}_{fppf}, G_{\infty}),
\]
where $G_{\infty}:=F_*G=G \times_X \sqrt[\infty]{X}$.\\
On the left-hand side, since $G$ is a finite flat group scheme, it follows from \cite[Theorem 2.1]{GillibertTame} that any $G$-torsor in the category of sheaves $\mathrm{Sh}(X_{klf})$ is representable by a scheme. Thus, the category $\mathrm{Sheaf~Torsors}(X_{klf}, G)$ is naturally equivalent to the category of representable log $G$-torsors $\mathrm{Tor}_{klf}(X,G)$.

On the right-hand side, let $\mathcal{P} \in \mathrm{Sheaf~Torsors}(\sqrt[\infty]{X}_{fppf}, G_{\infty})$. We check that the structural morphism $\mathcal{P} \to \sqrt[\infty]{X}$ is a representable morphism of stacks. Let $U$ be a scheme mapping to $\sqrt[\infty]{X}$ via a morphism $U \to \sqrt[\infty]{X}$. The fiber product (2-pullback) $\mathcal{P} \times_{\sqrt[\infty]{X}} U \to U$ is an fppf $G$-torsor over the scheme $U$. By applying \cite[Theorem 2.1]{GillibertTame} again, this pullback $\mathcal{P} \times_{\sqrt[\infty]{X}} U$ is representable by a scheme. Hence the map $\mathcal{P} \to \sqrt[\infty]{X}$ is representable. The functor $\Phi$ therefore restricts to the desired equivalence of categories:
\[
\mathrm{Tor}_{klf}(X,G) \equiv \mathrm{Tor}_{fppf}(\sqrt[\infty]{X},G_{}).
\]
Furthermore, under the equivalence $\Phi$, if $Y \to X$ is a log $G$-torsor, its image $\Phi(Y)$ is given by the induced morphism $\sqrt[\infty]{Y} \to \sqrt[\infty]{X}$ in \cite[Theorem 6.16]{TalpoV}. 
\end{proof}

\begin{prop}
Under the assumptions of Theorem \ref{equi} and if the underlying scheme of $X$ is quasi-compact and quasi-separated, an fppf $G_{}$-torsor on $\sqrt[\infty]{X}$ factors through $\sqrt[n]{X}$ for some positive integer $n$.
\end{prop}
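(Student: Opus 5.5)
The plan is to express $\sqrt[\infty]{X}$ as a cofiltered limit of the algebraic stacks $\sqrt[n]{X}$ (ordered by divisibility) and to use a standard limit argument for finitely presented objects. Let $\mathcal{P} \to \sqrt[\infty]{X}$ be an fppf $G$-torsor. By Theorem \ref{equi}, $\mathcal{P}$ is $\sqrt[\infty]{Y}$ for a log $G$-torsor $Y \to X$. It is often cleaner to work on the log side and show that $Y \to X$ is trivialized on the cover $X_n \to X$ given \'etale locally by the Kummer map $P \to \frac1n P$. We then descend this trivialization, which amounts to showing that $Y$ comes from a torsor on $\sqrt[n]{X}$.

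\textbf{Local step.} Because $\underline{X}$ is quasi-compact, we can cover it by finitely many \'etale opens $V_i$, each carrying a chart $P_i$. On each $V_i$, \cite[Corollary 3.13]{TalpoV} identifies $\sqrt[\infty]{V_i}$ with $[V_i\times_{\Spec\mathbb{Z}[P_i]}\Spec\mathbb{Z}[P_i^{\mathbb{Q}_{\ge0}}]/\mu_\infty]$, where $\mu_\infty=\varprojlim \mu_n$ is the Cartier dual of $P_i^{gp}\otimes\mathbb{Q}/\mathbb{Z}$. The same result gives $\sqrt[n]{V_i}=[V_{i,n}/\mu_n]$. Thus $\sqrt[\infty]{V_i}=\varprojlim \sqrt[n]{V_i}$ is a limit of quotient stacks along affine transition maps of the atlases. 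A $G$-torsor on $\sqrt[\infty]{V_i}$ is the same as a $\mu_\infty$-equivariant $G$-torsor on the affine-over-$V_i$ scheme $W_\infty=\varprojlim V_{i,n}$. Since $G$ is finite flat, hence of finite presentation, the classifying stack $BG$ is locally of finite presentation. Noetherian approximation (EGA IV$_3$ \S8, together with the analogous limit statement for $BG$) then shows that the torsor comes from some $W_n$. The equivariance data is compatible as well: $\mu_\infty=\varprojlim\mu_n$ and the action isomorphism is a morphism of finitely presented objects over the limit, so it too descends after enlarging $n$. Equivalently, a morphism $\sqrt[\infty]{V_i}\to BG$ factors through some $\sqrt[n]{V_i}$ because $BG$ is locally of finite presentation and $\sqrt[\infty]{V_i}$ is the limit of the $\sqrt[n]{V_i}$ with affine transition maps.

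\textbf{Globalization.} Take $n$ to be a common multiple of the finitely many $n_i$ produced above. The descended torsors on the $\sqrt[n]{V_i}$ must be glued over overlaps, which is where quasi-separatedness enters: the $V_i\times_X V_j$ are quasi-compact. The isomorphisms between the two restrictions exist over $\sqrt[\infty]{V_{ij}}$. The Hom-sheaves are finitely presented, so these isomorphisms descend to some finite level, and after a further enlargement of $n$ they satisfy the cocycle condition on the triple overlaps. This full faithfulness in the limit is the same argument applied to $\mathrm{Isom}$ schemes, which are finite over the base. Gluing then produces a $G$-torsor on $\sqrt[n]{X}$ whose pullback to $\sqrt[\infty]{X}$ is isomorphic to $\mathcal{P}$.

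\textbf{Main obstacle.} The delicate step is justifying the limit formalism for stacks: $\sqrt[\infty]{X}$ is not algebraic, as noted in Remark \ref{niel}. I would avoid stack-theoretic limit theorems altogether and work with the equivariant-scheme presentations in the local step, where only EGA IV limits for affine morphisms of schemes and finitely presented group actions are needed. The transition maps $V_{i,m}\to V_{i,n}$ are finite, hence affine, which makes the standard results apply. The required properties of the limit then follow from $\mathrm{Hom}(\varprojlim,\ \text{f.p.})=\varinjlim\mathrm{Hom}$.
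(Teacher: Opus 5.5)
Your proposal is correct in substance, but it follows a different route from the paper. The paper argues globally in one line. It notes that $BG$ is smooth, hence locally of finite presentation, and that each $\sqrt[n]{X}$ is proper over $X$ by Keel--Mori, hence quasi-compact and quasi-separated. It then quotes a general limit theorem for stacks (Rydh, Proposition B.1) to get $\varinjlim_n \mathrm{Mor}(\sqrt[n]{X},BG)\simeq \mathrm{Mor}(\sqrt[\infty]{X},BG)$. You instead work on finitely many charted \'etale pieces and use the presentations $[V_{i,n}/\mu_n]$ and $[W_\infty/\mu_\infty]$. On the atlases you run the EGA IV limit argument: descend the torsor, then the equivariance isomorphism over $\mu_m\times V_{i,m}$, then its cocycle condition. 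Finally you glue, using quasi-separatedness to keep overlaps quasi-compact. The paper's route is shorter and needs no gluing. Yours has the advantage of using limit results only where they are actually proved. Limit theorems like the one quoted are stated for inverse systems with affine bonding maps. The maps $\sqrt[m]{X}\to\sqrt[n]{X}$ are not affine, not even representable: at a boundary point with $\overline{\mathcal M}=\mathbb N$ the map on stabilizers is $\mu_m\to\mu_n$, $\zeta\mapsto\zeta^{m/n}$, which is not injective. Your atlas maps $V_{i,m}\to V_{i,n}$ are finite, so the passage to the limit there is legitimate.

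A few points to tidy up.
\begin{itemize}
\item Your sentence ``Equivalently, \dots\ $\sqrt[\infty]{V_i}$ is the limit of the $\sqrt[n]{V_i}$ with affine transition maps'' is false for exactly the reason above. Delete it; the equivariant-atlas argument is what carries the local step.
\item The overlaps $V_i\times_X V_j$ and triple overlaps generally have no global chart, so full faithfulness of $\varinjlim_n \mathrm{Tor}(\sqrt[n]{V},G)\to\mathrm{Tor}(\sqrt[\infty]{V},G)$ there is not literally ``the same argument''. Cover each such $V$ by finitely many charted affines. For injectivity, check equality of two morphisms on the pieces at a common finite level. For surjectivity, descend on each piece and glue, using injectivity on the quasi-compact pairwise intersections.
\item Your opening aside that $Y$ is trivialized on $X_n$ is inaccurate, since already classical fppf torsors on $X$ are not trivialized by $X_n\to X$. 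You never use it, so simply drop it.
\end{itemize}
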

\begin{proof} 
Since $G$ is finite flat, the classifying stack $BG$ is smooth, hence locally of finite presentation by \cite[Tag 0DLS]{stacks-project}. By Remark \ref{niel} and \cite[Theorem 1.1]{Keel}, the morphism $\sqrt[n]{X} \to X$ is proper, hence $\sqrt[n]{X}$ is quasi-compact and quasi-separated. By \cite[Proposition B1]{Ryd}, the natural functor
\[
\varinjlim_{i} \mathrm{Mor}(\sqrt[i]{X}, BG) \longrightarrow \mathrm{Mor}(\sqrt[\infty]{X}, BG)
\]
  is an equivalence of categories.
\end{proof}

\begin{remark}
 The first inspiration for this comparison theorem comes from \cite[Proposition 3.5]{BB}, where the authors introduce a notion of tamely ramified torsors, which can be viewed as the local version of Kummer log flat torsors. They establish an equivalence between such torsors and torsors on some (classical) root stacks when the structural group $G$ is abelian and the divisor is a strict normal crossing. 
\end{remark}
\begin{remark}
   Another instance of a comparison between Kummer log flat torsors and fppf torsors over tame stacks appears in \cite{GillibertTame}. The authors prove that, given a \(G\)-log torsor \(Y \to X\), the quotient stack \([Y/G]\) is a tame stack in the sense of \cite{AbrOV}. They subsequently introduce a notion of \emph{tame cover} and show that this construction defines a fully faithful functor from the category of \(G\)-log torsors to the category of tame covers \cite[\S 4]{GillibertTame}. Their proof assumes that \(X\) is regular and endowed with the logarithmic structure associated with a normal crossing divisor.
\end{remark}

\subsection{Valuative criterion of properness for proper tame stacks}
We recall the statement of the criterion:

\begin{theorem}(\cite[Theorem 3.1]{root})
Let $f:\mathfrak{X}\to\mathfrak{Y}$ be a proper tame morphism of algebraic stacks, $R$ a discrete valuation ring with fraction field $K$, and suppose given a 2-commutative diagram
\[
\begin{tikzcd}
\Spec K \arrow[r] \arrow[d] & \mathfrak{X} \arrow[d,"f"]\\
\Spec R \arrow[r] & \mathfrak{Y}.
\end{tikzcd}
\]
Then there exists a unique positive integer $n$ and a representable morphism $\sqrt[n]{\Spec R}\longrightarrow \mathfrak{X}$ lifting the given morphism $\Spec K\to\mathfrak{X}$, such that the diagram
\[
\begin{tikzcd}
&\Spec K \arrow[r] \arrow[d] &\mathfrak{X} \arrow[d,"f"]\\
\sqrt[n]{\Spec R} \arrow[urr] \arrow[r]&\Spec R\arrow[r] &\mathfrak{Y}
\end{tikzcd}
\]
is 2-commutative. 
\end{theorem}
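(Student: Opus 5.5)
The plan is to reduce to the local structure of tame stacks, so that the lifting problem becomes one of extending a torsor under a finite flat linearly reductive group scheme. Theorem \ref{lin-red} applied to $\Spec R$ with its divisorial log structure then produces the extension, and Theorem \ref{equi}, together with the proposition following it, turns that extension into a map from a finite root stack.

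\textbf{Step 1: reduce to $\mathfrak{X} = [V/G]$.} Let $\pi\colon\mathfrak{X}\to M$ be the relative coarse moduli space of $f$, which is proper over $\mathfrak{Y}$ because $f$ is proper.
\begin{enumerate}
\item By the valuative criterion for proper morphisms of algebraic spaces, the point $\Spec K\to M$ extends uniquely to a morphism $\Spec R\to M$ compatible with $\Spec R\to\mathfrak{Y}$, at least after a finite extension of $K$. Since $\mathfrak{X}\to\mathfrak{Y}$ is tame, I expect to be able to avoid that extension, because coarse moduli spaces of tame stacks commute with arbitrary base change.
\item The existence and uniqueness of the lift are étale local on $\Spec R$: uniqueness up to unique $2$-isomorphism lets one descend. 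So I may replace $R$ by its strict henselization.
\item By the local structure theorem for tame stacks \cite{AbrOV}, after this base change $\mathfrak{X}\times_M\Spec R\simeq[V/G]$. Here $G$ is a finite flat linearly reductive group scheme over $R$ and $V\to\Spec R$ is finite.
\end{enumerate}
The $K$-point of $\mathfrak{X}$ is then the data of an fppf $G$-torsor $P_K\to\Spec K$ together with a $G$-equivariant morphism $P_K\to V$.

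\textbf{Step 2: extend the torsor and the equivariant map.} Give $X=\Spec R$ the divisorial log structure induced by the closed point. It is log regular with trivial locus $U=\Spec K$.
\begin{enumerate}
\item By Theorem \ref{lin-red}, $P_K$ extends uniquely to a klf $G$-torsor $P\to X$.
\item By Theorem \ref{equi} and the subsequent proposition ($\Spec R$ is quasi-compact and quasi-separated), $P$ corresponds to an fppf $G$-torsor on $\sqrt[n]{X}$ for some $n$. By Remark \ref{rs}, $\sqrt[n]{X}$ is Cadman's root stack $\sqrt[n]{\Spec R}$. Concretely, the torsor is a $G\times\mu_n$-equivariant object over $R[t]/(t^n-\pi)$.
\item To extend $P_K\to V$ $G$-equivariantly, I use that the scheme underlying the torsor over the chart $\Spec R[t]/(t^n-\pi)$ is normal. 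It is klf, hence log étale over a log regular base when $G$ is tame, so it is log regular and therefore normal by \cite{KatoToric}. Since $V$ is finite over $\Spec R$, the map extends uniquely by integrality, and equivariance follows from uniqueness on the dense open.
\end{enumerate}
This yields a morphism $\sqrt[n]{\Spec R}\to[V/G]=\mathfrak{X}$ lifting the $K$-point, with the required $2$-commutativity.

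\textbf{Step 3: representability and uniqueness of $n$.} Among the root stacks through which the lift factors, take $n$ minimal. Representability amounts to injectivity of the induced map on the stabilizer $\mu_n$ of the closed point. If its kernel is $\mu_d$ with $d>1$, the map factors through $\sqrt[n/d]{\Spec R}$, contradicting minimality. Uniqueness of $n$ should follow because any representable lift determines the same klf torsor $P$, by the full faithfulness part of Theorem \ref{lin-red}, and hence the same minimal level.

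\textbf{Expected main obstacle.} I expect the main obstacle to be Step 3 together with the descent in Step 1. The difficulty is in checking that the minimal $n$ is intrinsic, and compatible with étale base change on $R$ so that the local lifts glue. I would handle this by characterizing $n$ as the order of the image of inertia, namely the ramification index of $P_K$ over $K$, which is invariant under strict henselization.
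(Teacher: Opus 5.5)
The paper gives no proof of this statement. It is quoted from \cite{root} and serves only as motivation for Corollary~\ref{cor}. Your proposal is therefore an independent attempt to deduce the criterion from Theorems~\ref{lin-red} and~\ref{equi}. That is not circular, and the overall plan is sensible: local model $[V/G]$ over $R^{sh}$, extend the torsor by log purity, pass to a finite root stack, then cut $n$ down. But two steps do not work as written.

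\textbf{Item 3 of Step 2 rests on a false claim.} A klf torsor under an infinitesimal linearly reductive group, such as $\mu_p$ in characteristic $p$, is not log \'etale. The fppf torsor $Q\to Y_n:=\Spec R[t]/(t^n-\pi)$ need not be normal or even reduced: if the $K$-point carries the trivial torsor, then $Q=\mu_p\times Y_n$.

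\begin{itemize}
\item \emph{Why integrality fails.} $\mathcal O(Q)$ need not be integrally closed in $\mathcal O(Q)\otimes_R K$. With $u=s-1$ on $\mu_p\times Y_n$, the nilpotent $u/t$ is integral over $R$ but does not lie in $\mathcal O(Q)$. So a map $Q_K\to V$ need not extend.
\item \emph{The repair.} Build equivariance in from the start. $G$-equivariant maps $Q\to V$ are sections of the finite morphism $Q\times^G V\to Y_n$. Since $Y_n$ is a DVR, the closure of the generic section is finite and birational over $Y_n$, hence is a section. Equivalently, run this closure argument for the finite representable morphism $\sqrt[n]{X}\times_{BG}[V/G]\to\sqrt[n]{X}$.
\item \emph{What survives.} Once the extension exists, compatibility with the $\mu_n$-linearization follows from schematic density of the generic fibre, which holds by flatness over the integral $Y_n$. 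Your ``uniqueness on the dense open'' is fine if read in this schematic sense.
\end{itemize}

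\textbf{Step 3 assumes the key point.} The claim that a kernel $\mu_d$ forces a factorization through $\sqrt[n/d]{\Spec R}$ is exactly the content of representability. Nothing in the paper supplies it: the proposition after Theorem~\ref{equi} only gives \emph{some} level. It is also not a rigidification, since $\mu_d$ lies in the inertia only over the closed point and is not a flat subgroup of the inertia of $\sqrt[n]{\Spec R}$. What you need is a descent statement, and there are two ways to get it.

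\begin{itemize}
\item \emph{Local descent.} In the local model, kernel $\mu_d$ means $\mu_d$ acts trivially on the closed fibre of $Q$. The map $[Y_n/\mu_d]\to Y_n/\mu_d\simeq Y_{n/d}$ is a tame coarse moduli space. Hence the $G$-equivariant algebra $\mathcal O_Q$ and the invariant map $Q\to V$ descend to $Y_{n/d}$, by descent of vector bundles along good moduli spaces when stabilizers act trivially on fibres.
\item \emph{Relative coarse moduli.} Alternatively, take the relative coarse moduli space of $\sqrt[n]{\Spec R}\to\mathfrak X$ (Abramovich--Olsson--Vistoli) and identify it with $\sqrt[n/d]{\Spec R}$.
\end{itemize}

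Your proposed invariant, the ``ramification index of $P_K$'', is not meaningful when $P_K$ is non-reduced or when $\mathfrak X$ is not $BG$. The right invariant is the order of the image of $\mu_n$ in the stabilizer.

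Uniqueness of $n$, by contrast, does not need this lemma. Two representable lifts at levels $m,m'$ agree on $\sqrt[N]{\Spec R}$ with $N=\mathrm{lcm}(m,m')$, because the relevant $\mathrm{Isom}$ is finite and a generic section extends. Their stabilizer kernels there are $\mu_{N/m}$ and $\mu_{N/m'}$, so $m=m'$.

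Two smaller points:
\begin{itemize}
\item Step~1(1) needs no extension of $K$, because $M\times_{\mathfrak Y}\Spec R$ is a proper algebraic space. Base-change compatibility of tame coarse spaces is what you need in Step~1(3), not Step~1(1).
\item The descent from $R^{sh}$ in Step~1(2) needs a limit argument down to an \'etale neighbourhood, plus the same uniqueness over non-local \'etale $R$-algebras.
\end{itemize}
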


\begin{remark}
    When $G$ is finite flat and linearly reductive, the classifying stack $BG$ is tame by \cite[\S 3]{AbrOV}. Furthermore, it is proper. Indeed, consider the Cartesian diagram where the bottom horizontal map is the diagonal:
\[
\begin{tikzcd}
G \arrow[r] \arrow[d] \arrow[dr, phantom, "\lrcorner", very near start] & X \arrow[d] \\
BG \arrow[r,"\Delta"] & BG \times_X BG
\end{tikzcd}
\]
By base change and fppf descent, $G$ is proper if and only if $BG$ is separated. Since $BG$ is smooth (\cite[Tag 0DLS]{stacks-project}), it is locally of finite type. Moreover, quasi-compactness follows from the existence of a surjective morphism $X \to BG$. Hence $BG$ is of finite type. 

\end{remark}

The valuative criterion above shows that morphisms from the generic point of a discrete valuation ring into a proper tame stack extend after passing to a suitable root stack. The following corollary may be viewed as a higher-dimensional analogue of this phenomenon in the special case of the classifying stack $BG$.

\begin{corollary}\label{cor}
Let $X$ be either 
\begin{enumerate}
    \item a toric variety with dense torus $U$ and toric divisor $D$,
    \item a nodal proper curve with $U$ the smooth locus and $D:=X\backslash U$.
\end{enumerate}
Let $G$ be a finite flat linearly reductive group scheme over $X$. Then any rational map \begin{tikzcd}[column sep=0.8em]
X \arrow[dashed]{r} & BG
\end{tikzcd} extends (in a unique way) to a morphism $\sqrt[n]{X} \to BG$ for some integer $n$. In addition, if $X$ is regular and $D$ is a strict normal crossing divisor, then for every point $x \in D$, the induced fppf $G$-torsor on $U$ specializes to an fppf $G$-torsor over $\Spec k(x)$.
\end{corollary}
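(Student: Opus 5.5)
The plan is to read the rational map $X \dashrightarrow BG$ as an fppf $G$-torsor $P$ on a dense open subset, and to arrange that this open subset is $U$. Shrinking the domain of definition is harmless, because by Theorem \ref{lin-red} extensions are unique on log regular log schemes. So we may take $P \in \mathrm{Tor}_{fppf}(U,G)$, where $U$ is the locus of trivial log structure.

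\textbf{Step 1: the log structure.} We give $X$ the natural log structure:
\begin{enumerate}
\item[(1)] In the toric case, we use the divisorial log structure of Example \ref{log str}(b) attached to $U \subset X$. Toric varieties with this structure are log regular by \cite{KatoToric}, and the trivial locus is exactly the torus $U$.
\item[(2)] For a nodal curve, we choose a log structure making $X$ log smooth over a log point or over a log regular base, as in Corollary \ref{logcurve}. Then $X$ is log regular by \cite[Theorem 8.2]{KatoToric}, and its trivial locus is the smooth locus $U$. In practice one may take the divisorial structure $j_*\mathcal O_U^\times\cap\mathcal O_X$ and check log regularity étale locally at a node, where the model is $\Spec k[x,y]/(xy)$ with monoid $\mathbb N^2$ (the toric chart $\mathbb N^2\to k[x,y]/(xy)$).
\end{enumerate}
Verifying this log regularity, and that $U$ is precisely the trivial locus, is the step I expect to need the most care, especially in case (2).

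\textbf{Step 2: extension.} Theorem \ref{lin-red} extends $P$ uniquely to a klf $G$-torsor $Y \to X$. Theorem \ref{equi} turns $Y$ into an fppf torsor on $\sqrt[\infty]{X}$. Both toric varieties and proper curves are noetherian, hence quasi-compact and quasi-separated, so the subsequent Proposition shows that this torsor factors through $\sqrt[n]{X}$ for some $n$. This gives a morphism $\sqrt[n]{X} \to BG$ extending $P$, since the root stack is an isomorphism over $U$.

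\textbf{Uniqueness.} Two such extensions, pulled back to $\sqrt[m]{X}$ for a common multiple $m$ and then to $\sqrt[\infty]{X}$, give klf torsors on $X$ agreeing on $U$. Full faithfulness in Theorem \ref{lin-red} identifies them. To descend this isomorphism back to $\sqrt[m]{X}$, we use that the colimit in the Proposition is an equivalence of categories, so it is fully faithful on morphisms.

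\textbf{Specialization.} Assume $\underline X$ is regular and $D$ is snc, and let $x\in D$. We choose a morphism $\Spec \bar k(x) \to \sqrt[n]{X}$ lying over $x$; it exists because $\sqrt[n]{X}\to X$ is surjective, and étale locally it is $[\Spec \mathcal O[t_i]/(t_i^n-f_i)/\mu_n^r]$. Composing with $\sqrt[n]{X}\to BG$ gives a $G$-torsor over a point above $x$. To get a torsor over $\Spec k(x)$ itself, I would use that for snc $D$ the root stack $\sqrt[n]{X}$ is Cadman's iterated root stack (Remark \ref{rs}), and that the residual gerbe at $x$ is $B\mu_n^r$ over $k(x)$. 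This gerbe is neutral, with the canonical section given by choosing all roots to be $0$ and the trivial line bundles. Restricting along that section $\Spec k(x)\to\sqrt[n]{X}$ yields the desired fppf $G$-torsor on $\Spec k(x)$. Such a torsor is the limit of $P$ in the sense of the valuative criterion, i.e. along a DVR through $x$ transverse to $D$.
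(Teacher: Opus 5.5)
For the toric case and for the specialization statement, your argument is the paper's. You use the divisorial log structure, extend from $U$ by Theorem \ref{lin-red}, and land in some $\sqrt[n]{X}$ via Theorem \ref{equi} and the finite-level Proposition. At $x\in D$, you take the $k(x)$-point of $\sqrt[n]{X}$ obtained by extending $\overline{\mathcal M}_{X|\Spec k(x)}\simeq\mathbb N^r\to\mathrm{Div}$ to $\frac1n\mathbb N^r$, sending every nonzero element to $(\mathcal O,0)$; that is exactly your ``roots $0$, trivial line bundles'' section. One small correction: the reduction to $P\in\mathrm{Tor}_{\mathrm{fppf}}(U,G)$ is not a matter of shrinking. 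It requires the rational map to be defined on all of $U$, since a torsor on a smaller open that ramifies along a divisor meeting $U$ extends to no $\sqrt[n]{X}$. The paper reads the statement this way too, so say so explicitly.

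The genuine gap is case (2), which is precisely the step you flagged, and the verification you sketch cannot be carried out. Log regular schemes are normal; the proof of Theorem \ref{lin-red} itself uses this, via \cite{KatoToric}. A nodal curve is not normal at its nodes, so no log structure makes it log regular. Each of your candidate structures fails concretely:
\begin{enumerate}
\item[(i)] The chart $\mathbb N^2\to k[x,y]/(xy)$, $e_1\mapsto x$, $e_2\mapsto y$, violates Kato's condition $\dim\mathcal O_{X,q}=\dim\mathcal O_{X,q}/I(q)+\mathrm{rank}\,\overline{\mathcal M}^{\mathrm{gp}}_q$ at the node $q$, since $1\neq 0+2$. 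It is also nontrivial along both branches: at points with $x\neq 0$, $y=0$, the element $e_2$ maps to a non-unit. So its trivial locus is not the smooth locus.
\item[(ii)] The divisorial structure $j_*\mathcal O_U^\times\cap\mathcal O_X$ is not even fine at a node. The sections $x^a+y^b$ show that the orders along the two branches fill out the non-finitely generated monoid $\{(0,0)\}\cup\mathbb Z_{\ge 1}^2$.
\item[(iii)] The structure making $X$ log smooth over the standard log point is pulled back from a base that is not log regular ($0\neq 0+1$), so \cite[Theorem 8.2]{KatoToric} does not apply. Moreover, that structure is nontrivial at every point, so its trivial locus is empty.
\end{enumerate}

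The paper's proof makes the same assertion (``in both cases, $X$ is log regular'') without justification, so this gap is inherited rather than introduced by you. What the method actually proves in the curve setting is the analogue for a log curve $X\to S$ over a log regular base $S$, as in Corollary \ref{logcurve}; a nodal degeneration over a DVR is an example. There the total space is log regular by \cite[Theorem 8.2]{KatoToric}, and $U$ is its locus of trivial log structure, not the smooth locus of a nodal curve over a field.
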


\begin{proof}
In the first case, $X$ is endowed with the divisorial log structure associated to the toric boundary. In the second case, $X$ is equipped with the logarithmic structure making it log smooth over a point (cf. \cite{Kat}). In both cases, $X$ is log regular. By Theorem \ref{lin-red}, any fppf $G$-torsor over the dense open of triviality $U$ extends uniquely to a log $G$-torsor over $X$. We then use Theorem \ref{equi} to conclude the first statement because $X$ is quasi-compact and separated. For the second statement, we note that a morphism $\Spec k(x) \to X$ lifts to $\sqrt[n]{X}$ if and only if the induced morphism $\overline{\mathcal{M}}_{X|\Spec k(x)} \to k(x)$ lifts to a morphism $\frac{1}{n}\overline{\mathcal{M}}_{X|\Spec k(x)} \to k(x)$. Note that $\overline{\mathcal{M}}_{X|\Spec k(x)} \simeq \mathbb{N}^r$ and the map defining the log structure at $x$ is given by $\mathbb{N}^r \oplus k(x)^\times \to k(x)$, $(a,u)\mapsto 0$ for $a\neq 0$ and $(0,u)\mapsto u$.
\end{proof}
\subsection{Log Nori fundamental group}
The construction below of the logarithmic \'etale fundamental group appears for example in \cite[Section 4]{illusie2002overview} and \cite[Section 10]{katoLogStructuresII} .\\

Let $X$ be a log scheme with a connected underlying scheme. Let $\overline{x} \to X$ be a log geometric point of $X$ as defined in \cite[Definition 4.1]{illusie2002overview}. We denote %by $(\mathrm{kl\acute{e}}(X))$ the category consisting of finite Kummer log \'etale covers of $X$and 
by $(\mathrm{fsets})$ the category of finite sets. The fiber functor
\[
F : \mathrm{fk\acute{e}t}(X) \longrightarrow (\mathrm{fsets}), 
\qquad T \longmapsto F(T) := T_{\overline{x}},
\]
is pro-representable by a pro-object $\widetilde{X}$ of 
$\mathrm{fk\acute{e}t}(X)$, and if $\pi_1^{\log,\acute{e}t}(X,\overline{x})$ denotes the profinite group $\mathrm{Aut}(F)$, then $F$ induces an equivalence of categories
\[
\mathrm{fk\acute{e}t}(X)
\;\xrightarrow{\sim}\; {\pi_1^{\log, \acute{e}t}(X,\overline{x})}-(\mathrm{fsets}),
\]
where the right-hand side denotes the category of finite sets endowed with a 
continuous action of $\pi_1^{\log, \acute{e}t}(X,\overline{x})$. The group $\pi_1^{\log, \acute{e}t}(X,\overline{x})$ is called the \emph{logarithmic \'etale fundamental group} 
of $X$ at $\overline{x}$, and $\widetilde{X}$ is called a \emph{log universal cover} of $X$. 
If $G$ is a finite (abstract) group, Kummer \'etale Galois covers of $X$ with group $G$ (i.e.\ Kummer \'etale covers $Y \to X$ endowed with an action of $G$ by 
$X$-automorphisms such that $Y$ is a Kummer log \'etale $G$-torsor over $X$) are pushed-out from $\widetilde{X}$ along continuous homomorphisms $\pi_1^{\log, \acute{e}t}(X,\overline{x}) \longrightarrow G$.\\

 Now, we fix an algebraically closed field $k$ of characteristic $0$. 
Then every $k$-group scheme is \'etale, and hence every fppf 
$G$-torsor is \'etale by descent. In particular, any construction of 
an analogue of the Nori fundamental group of a $k$-log scheme $X$ in this setting recovers the logarithmic \'etale fundamental group recalled above. Next, our goal is to construct an analogue of the Nori fundamental group which classifies Kummer log flat torsors in positive characteristic.

%\subsection{Log Nori fundamental group}
We fix for the rest of the section a field $k$ of characteristic $p>0$. Let $X$ be a $k$-log scheme and let $x_0$ be a $k$-log point of $X$ (i.e. a section $\Spec k \to X$ such that the underlying scheme $\Spec k$ of $x_0$ is endowed with a log structure making $x_0 \to X$ a morphism of log schemes) . Consider the category $\mathsf{FT}(X)_{x_0}$ whose objects are triples $(T,G,t_0)$ where
\begin{itemize}
 \item $G$ is a finite linearly reductive group scheme over $k$,
  \item $T \to X$ is a Kummer log flat $G$-torsor,
   \item $t_0$ is a $k$-log point of $T$ lying over $x_0$ (we say that $T$ is pointed over $x_0$).
\end{itemize}
A morphism $(f,g)\colon (T,G,t_0)\to (T',G',t_0')$ is a morphism of log torsors
sending $t_0$ to $t_0'$. We denote by $\mathsf{PT}(X)_{x_0}$ the category of triples as above where we allow $G$ to be a profinite group scheme.

\begin{definition}[Log Nori Fundamental Group]
A profinite group scheme $\pi_1^{\mathrm{N},log}(X,x_0)$ is called a \emph{Log Nori Fundamental Group} of $X$ if there exists a triple 
\[
(\widetilde{T}, \pi_1^{\mathrm{N}, log}(X,x_0), \widetilde{t}_0)\in \mathsf{PT}(X)_{x_0}
\]
such that for every object $(T,G,t_0)$ in $\mathsf{FT}(X)_{x_0}$ there exists a unique morphism
\[
(\widetilde{T}, \pi_1^{\mathrm{N}, log}(X,x_0), \widetilde{t}_0)\longrightarrow (T,G,t_0).
\]
We call $\widetilde{T}$ the \emph{universal log torsor} over $X$. %In particular, $(\widetilde{T}, \pi_1^{N,log}(X,x_0), \widetilde{t}_0)$ is initial in $PT(X)_{x_0}$.
\end{definition}
In what follows, we construct this fundamental group when $X$ is log regular. 
 
\begin{lemma}\label{prod} Assume that $X$ is log regular with integral underlying scheme and let $U$ be the dense open subset where the log structure is trivial. Consider a pair of morphisms
\[
(f_i,\pi_i)\colon (T_i,G_i,t_i) \longrightarrow (T_0,G_0,t_0),
\qquad i=1,2,
\]
of  klf torsors over $X$ pointed over $x_0$. Then the triple
\[
\bigl(T_1\times_{T_0} T_2,\; G_1\times_{G_0} G_2, (t_1, t_2)\bigr)
=:\bigl(T^{\times},G^{\times}, t^{\times}\bigr)
\]
is a klf torsor over $X$, pointed over $x_0$. 
\end{lemma}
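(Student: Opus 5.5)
The plan is to use the log purity theorem (Theorem \ref{lin-red}) to move the question to the trivial locus $U$. On $U$ it becomes the classical statement that a fibre product of pointed fppf torsors is a torsor. Then we extend back to $X$.

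\textbf{Step 1: the group is finite linearly reductive.} First check that $G^{\times}=G_1\times_{G_0}G_2$ is again a finite linearly reductive $k$-group scheme, so that $(T^\times,G^\times,t^\times)$ is a candidate object of $\mathsf{FT}(X)_{x_0}$. Finiteness is clear. For linear reductivity, the key point is that the morphisms $\pi_i\colon G_i\to G_0$ are forced to be compatible with the torsor structures. The image $\pi_1(G_1)$ is a closed subgroup $H_0\subset G_0$, and $T_1\to T_0$ factors through a reduction of structure group of $T_0$ to $H_0$.

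Rather than fuss with images, I would argue as follows. $G^\times$ is a closed subgroup scheme of $G_1\times G_2$. A product of linearly reductive finite group schemes is linearly reductive. Closed subgroups of finite linearly reductive group schemes over a field are linearly reductive; this follows from the classification in \cite{AbrOV}, as extensions of étale groups of order prime to $p$ by diagonalizable groups.

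\textbf{Step 2: the restriction to $U$.} Restrict to $U$. There the $T_i|_U$ are fppf torsors, and the fs fibre product over $U$ is the ordinary fibre product because the log structures are trivial. The standard argument applies: fppf locally on $U$ trivialize $T_0$, $T_1$, $T_2$ compatibly. After pulling back to a cover where $T_0$ has a section $s$, choose sections of $T_1$, $T_2$; adjusting them by elements of $G_i$ (using fppf surjectivity of $T_i\to T_0$ locally), we may make them lie over $s$. Then $T^\times$ becomes isomorphic to $G^\times$ with its translation action. So $T^\times|_U$ is an fppf $G^\times$-torsor.

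\textbf{Step 3: extension over $X$ and identification with $T^\times$.} By Theorem \ref{lin-red}, applied to $G^\times$ (linearly reductive by Step 1), $T^\times|_U$ extends to a klf $G^\times$-torsor $T'$ over $X$. Pushing $T'$ out along $G^\times\to G_i$ gives klf $G_i$-torsors. These agree with $T_i$ on $U$, hence agree with $T_i$ on $X$ by full faithfulness in Theorem \ref{lin-red}. The same holds compatibly over $T_0$, so we obtain a map $T'\to T_1\times_{T_0}T_2=T^\times$ of fs log schemes.

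It remains to show this map is an isomorphism. I would check this klf-locally on $X$. After a klf cover trivializing $T_0$, $T_1$, $T_2$, the same computation as in Step 2 shows directly that the fs fibre product is a trivial $G^\times$-torsor. This local computation in fact proves the lemma outright, without Step 3.

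\textbf{Main obstacle and pointing.} The main obstacle is exactly this local trivialization. One needs the lifting of a section of $T_0$ to sections of $T_i$ over a common klf refinement, and one must ensure that fs fibre products behave well when all objects are strict over a common base. Since the $T_i\to X$ become strict over the cover once they are trivialized, the fs fibre product coincides with the scheme fibre product there. Finally, $(t_1,t_2)$ is a log point of $T^\times$ over $x_0$ by the universal property of the fs fibre product, since $f_1(t_1)=t_0=f_2(t_2)$.
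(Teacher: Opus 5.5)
Your Step~1 is fine (it is the paper's Lemma~\ref{linred}). Step~2, however, has a genuine gap, and it is the same gap behind your remark that a klf-local computation ``proves the lemma outright''.

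You use that $T_i\to T_0$ is fppf surjective, so that a local section of $T_i$ can be translated by $G_i$ to lie over a chosen section $s$ of $T_0$. This holds only when $\pi_i\colon G_i\to G_0$ is faithfully flat. In general $f_i(T_i)$ is only a reduction of structure group of $T_0$ to the image of $\pi_i$. Write $f_1(s_1)=s\,a$ and $f_2(s_2)=s\,b$. Then the fibre product acquires a local section iff $b^{-1}a$ lies locally in $\pi_2(G_2)\pi_1(G_1)$, and nothing local forces this.

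Concretely, take $G_1=G_2=1$ and $T_1=T_2=X$. Then $T^{\times}$ is the equalizer of two sections $f_1,f_2$ of $T_0$, and it is a torsor iff $f_1=f_2$. Now let $X=\Spec k[\delta]/(\delta^2)$, $G_0=\mu_p$, $T_0=\mu_{p,X}$, $f_1=1$ and $f_2=1+\delta$. Everything is compatibly pointed at the closed point, yet $T^{\times}=\Spec k\neq X$. So the torsor property is a global statement. It depends on the base being reduced and connected and on the pointing. Your argument never uses these hypotheses, and they are not preserved by passing to klf or fppf covers.

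The missing ingredient is the paper's Steps~1--2.

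\emph{Surjectivity onto $X$.} The product $T_1\times_X T_2$ is a klf $(G_1\times G_2)$-torsor whose structure map $\pi$ is finite (hence closed) and open. Inside it, $T^{\times}=g^{-1}(\epsilon)$, where $g\colon T_1\times_X T_2\to G_0$ is the map with $g p_1=p_2$. Set-theoretically this is $g^{-1}(G_0^0)$, which is clopen. Hence $\pi(T^{\times})$ is clopen in the connected $X$. It is nonempty because it contains the image of $(t_1,t_2)$, so it equals $X$.

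\emph{Torsor over $U$.} The free $G^{\times}$-action on the affine $T^{\times}_U$ makes it a torsor over a closed subscheme $Z\subset U$. Surjectivity together with $U$ being reduced and connected (Nori's argument) gives $Z=U$. This also covers the case $x_0\notin U$, where the pointing alone does not show $T^{\times}_U\neq\emptyset$.

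Once this is in place, your Step~3 can be completed, in two parts.
\begin{enumerate}
\item[(i)] You need compatibility of the pushouts of $T'$ with $f_1,f_2$. This follows because restriction to $U$ is injective on $\pi_i$-equivariant morphisms. The paper proves this via the closed diagonal and density. Alternatively, view $f_i$ as a $G_0$-isomorphism $T_i\times^{G_i}G_0\simeq T_0$ and apply full faithfulness.
\item[(ii)] The local sections of $T_i$ and $T_0$ must be taken as images of a single local section of $T'$. These align automatically, which is what makes the local identification $T'\simeq T^{\times}$ valid.
\end{enumerate}
The extension $T'$ is what supplies the compatible sections; it cannot be bypassed by a purely local argument.
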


\begin{proof} 
We proceed in several steps.
\begin{itemize}
    \item \textbf{Step 1: $T^{\times} \to X$ is surjective on sets}. For $i \in \{1,2\}$, if $p_i: T_1 \times_X T_2 \to T_i \to T_0$, there exists a (unique) morphism $g:T_1 \times_X T_2 \to G_0$ such that $gp_1=p_2$ (because $T_0 \to X$ is a klf $G_0$-torsor). In particular, considering the natural map $T^{\times} \to T_1 \times_X T_2$ and if $\epsilon: \Spec k \to G_0$ is the (strict) identity section, we have that $T^{\times}$ is the closed subscheme $g^{-1}(\epsilon) \subset T_1 \times_X T_2$. Now, we know that $\pi: T_1 \times_X T_2 \to X$ is a klf $(G_1 \times G_2)$-torsor. By \cite[Theorem 2.1]{GillibertTame}, $\pi$ has a finite underlying morphism of schemes, and since $X$ is locally noetherian, $\pi$ is locally of finite presentation. It follows from \cite[Proposition 2.5]{katoLogStructuresII} that $\pi$ is open. But it is also closed as it is finite. On the other hand, since $G_0$ is finite over $k$, the connected components of $G_0$ have a single element each, hence $\epsilon= G^0_0$ as sets, where $G^0_0$ is the identity component of $G_0$. Since $G^0_0$ is open and closed, $g^{-1}(G^0_0)$ is open and closed, and therefore, $\pi(\underline{T}^{\times})=\pi(g^{-1}(\epsilon))=\pi(g^{-1}(G^0_0))$ (as sets) is open and closed in $\underline{X}$. But since $\underline{X}$ is connected, $\pi(\underline{T}^{\times})=\underline{X}$ as sets, hence the claim.\\

    \item \textbf{Step 2: the restriction $T_U^{\times} \to U$ is an fppf $G^{\times}$-torsor.} Since $T^{\times} \to X$ is surjective on sets, $T^{\times}_U$ (where $T^{\times}_U=T_{1,U} \times_{T_{0,U}} T_{2,U}= T^{\times} \times_X U$) is not empty. Since the action of $G^{\times}$ on $T^{\times}_U$ is free and $T^{\times}_U \to U$ is affine, it follows that there exists a closed subscheme $Z \subset U$ such that $T^{\times}_U \to Z$ is an fppf $G^{\times}$-torsor. Since $U$ is reduced and connected, $Z=U$ by \cite[Chapter II]{Nori}. Therefore,
  \[T^{\times}_U \simeq T_{1,U} \times_{T_{0,U}}T_{2,U} \to U\]
  is an fppf $G^{\times}$-torsor. By Lemma \ref{linred}, $G^{\times}$ is linearly reductive. Hence, by Theorem \ref{lin-red}, $T^{\times}_U $ extends uniquely to
  \[\widetilde{T} \to X\]
  a klf $G^{\times}$-torsor (note that if $G \to \Spec k$ is finite linearly reductive, then $G_X \to X$ is finite flat linearly reductive by \cite[Proposition 2.6]{AbrOV}).\\

  \item \textbf{Step 3: $\widetilde{T}\simeq T^{\times}$ as $G^{\times}$-log schemes over $X$.}

  Denote the projections \[\rho_i: G^{\times}=G_1 \times_{G_0} G_2 \to G_i,~~~~~~i \in \{0,1,2\}.\]
  Let $\widetilde{T} \times^{G^{\times}}G_i \to X$ be the contracted product corresponding to the klf $G_i$-torsor obtained by push-out of $\widetilde{T} \to X$ along $\rho_i$. \\
  \underline{Claim 1}: $\widetilde{T} \times^{G^{\times}}G_i \simeq T_{i}$ as klf $G_i$-torsors over $X$.\\
  We consider the projections
\[\psi_{i,U}: \widetilde{T}_U=T^{\times}_U=T_{1,U} \times_{T_{0,U}} T_{2,U} \to T_{i,U}\]
which are equivariant with respect to $\rho_i$. We now construct
\begin{align*}
    \Phi_{i,U}:  \widetilde{T}_U \times G_{i} = (T_{1,U} \times_{T_{0,U}} T_{2,U} ) \times G_{i} &\to  T_{i,U}\\
     (t,g) &\mapsto \psi_{i,U}(t).g
\end{align*}
Let $h \in G^{\times}$. Consider the action of $G^{\times}$ on  $\widetilde{T}_U \times G_{i}$  given by $(t,g).h= (t.h, \rho_i(h)^{-1}g)$. We have that
\[\Phi_{i,U}((t,g).h)=\Phi_{i,U}(t.h, \rho_i(h)^{-1}g)=\psi_{i,U}(t.h)\rho_i(h)^{-1}g=\psi_{i,U}(t)\rho_i(h)\rho_i(h)^{-1}g=\Phi_{i,U}(t,g)\]
where the next-to-last equality follows from the fact that $\psi_{i,U}$ is $\rho_i$-equivariant. Therefore, we get a map
\[\overline{\Phi}_{i,U}:\widetilde{T}_U \times^{G^{\times}} G_{i} \to T_{i,U}\]
which is $G_i$-equivariant. Since both sides are fppf $G_i$-torsors over $U$, it is an isomorphism of $G_i$-torsors. By Theorem \ref{lin-red} (uniqueness of extension), we deduce an isomorphism of klf $G_i$-torsors
\[\widetilde{T} \times^{G^{\times}} G_i \xrightarrow[h_i]{\simeq} T_i,\]
which proves the claim.\\
\underline{Claim 2:} there exists an isomorphism $\widetilde{T} \simeq T_1 \times_{T_0} T_2$ over $X$ which is $G^{\times}$-equivariant.\\
Note that for $i \in \{1,2\}$, the composition \[\widetilde{T}_U \times^{G^{\times}} G_i  \xrightarrow[h_i]{\simeq} T_{i,U} \xrightarrow{f_i} T_{0,U} \xrightarrow[h_0^{-1}]{\simeq} \widetilde{T}_U \times^{G^{\times}} G_0 \] is given by
\begin{align*}
    \widetilde{T}_U \times^{G^{\times}} G_i & \to \widetilde{T}_U \times^{G^{\times}} G_0,\\
    (t,g_i) &\mapsto (t,\pi_i(g_i)).
\end{align*}
We would like to prove that the extended $\pi_i$-morphism
\[\widetilde{T} \times^{G^{\times}} G_i \xrightarrow[h_i]{\simeq} T_i \xrightarrow{f_i} T_0 \xrightarrow[h_0^{-1}]{\simeq} \widetilde{T} \times^{G^{\times}} G_0\]
is given by $(t,g_i) \mapsto (t,\pi_i(g_i))$ as well. For this, it is enough to show that if $H_1$ and $H_2$ are finite flat $X$-group schemes with a map $H_1 \to H_2$, and if $Y_1 \to X$ is a klf $H_1$-torsor and $Y_2 \to X$ a klf $H_2$-torsor, then the restriction map of morphisms of torsors 
\[\mathrm{Hom}_{X}(Y_1,Y_2) \to \mathrm{Hom}_{U}(Y_{1,U},Y_{2,U})\]
is injective. Let $f,g: Y_1 \to Y_2$ be two morphisms of torsors over $X$ such that $f_U=g_U$. Consider the Cartesian diagram

\[
\begin{matrix}
E & \longrightarrow & Y_2 \\
\downarrow & & \downarrow \\
Y_1 & \longrightarrow & Y_2 \times_X Y_2
\end{matrix}
\]
where the bottom horizontal map is the map $(f,g)$ and the right vertical map is the diagonal $\Delta_{2}$ (we take the fs fiber product). The diagonal $\Delta_2$ is closed. Indeed, if we compose it with the isomorphism $Y_2 \times_X Y_2 \simeq H_2 \times Y_2$ (it is an isomorphism because $Y_2 \to X$ is a klf $H_2$-torsor), we get the morphism induced by the section $X \to H_2$, which is a closed immersion since $H_2 \to X$ is separated (since it is finite). Therefore, $E \to Y_1$ is a closed immersion. But it contains a dense open subset of $Y_1$, hence $E=Y_1$ and therefore $f=g$. This concludes the proof of the claim. \\

Now, it remains to notice that given how the maps $\widetilde{T} \times^{G^{\times}} G_i \to \widetilde{T} \times^{G^{\times}} G_0$ are defined, we have
\[T_1 \times_{T_0} T_2 \simeq \widetilde{T} \times^{G^{\times}}G_1 \times_{\widetilde{T} \times^{G^{\times}}G_0}\widetilde{T} \times^{G^{\times}}G_2 \simeq \widetilde{T} \times^{G^{\times}} G_1 \times_{G_0} G_2 \simeq \widetilde{T}.\]
This finishes the proof of the lemma.
\end{itemize}
\end{proof}
\begin{lemma}\label{linred}
    Let $G_1$, $G_2$ and $G$ be finite linearly reductive $k$-group schemes. Assume that we have group homomorphisms $G_1 \to G_0$ and $G_2 \to G_0$. Then $G_1 \times_G G_2$ is a finite linearly reductive $k$-group scheme.
\end{lemma}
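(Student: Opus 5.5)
We prove the statement by viewing $G_1\times_{G_0} G_2$ as a closed subgroup scheme of $G_1\times_k G_2$ (the statement writes $G$ for $G_0$). We then use two standard facts about finite linearly reductive group schemes over a field: a product of two of them is again linearly reductive, and a closed subgroup scheme of one is again linearly reductive. Finiteness is immediate, since $G_1\times_{G_0}G_2$ is a closed subscheme of the finite $k$-scheme $G_1\times_k G_2$. It is the kernel of the homomorphism $G_1\times G_2\to G_0$, $(g_1,g_2)\mapsto \pi_1(g_1)\pi_2(g_2)^{-1}$; alternatively, it is the preimage of the diagonal $G_0\subset G_0\times G_0$, which is closed because $G_0$ is separated.

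\textbf{Step 1: the product is linearly reductive.} The cleanest route is the structure theorem for finite linearly reductive groups over a field (\cite{AbrOV}, in the spirit of \cite[Lemma 2.20]{AbrOV}). After base change to $\bar k$, which is harmless because linear reductivity can be checked after a field extension by \cite[Proposition 2.6]{AbrOV}, a finite group scheme is linearly reductive if and only if its identity component $G^0$ is diagonalizable and $G/G^0$ is constant of order prime to $p$. For $G_1\times G_2$ over $\bar k$, the identity component is $G_1^0\times G_2^0$, which is diagonalizable. The component group is $(G_1/G_1^0)\times(G_2/G_2^0)$, whose order is prime to $p$. Hence $G_1\times G_2$ is linearly reductive.

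\textbf{Step 2: a closed subgroup $H\subset G$ of a finite linearly reductive group is linearly reductive.} Again we work over $\bar k$ and use the same characterization. The identity component $H^0$ is a closed subgroup of $G^0$, and closed subgroups of diagonalizable groups are diagonalizable: they correspond to quotients of the character group. The group $H/H^0$ injects into $G/G^0$. This holds because $H\cap G^0$ is infinitesimal and hence contained in $H^0$, while $H^0\subset G^0$, so $H\cap G^0=H^0$. Therefore $|H/H^0|$ divides $|G/G^0|$ and is prime to $p$.

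Alternatively, there is a representation-theoretic argument that avoids the structure theorem. Every $H$-representation $V$ embeds into $\mathrm{Ind}_H^G V=(k[G]\otimes V)^H$. Since $G/H$ is finite, induction is exact, and taking $H$-invariants of $V$ is the same as taking $G$-invariants of $\mathrm{Ind}_H^G V$, which is exact.

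The main obstacle is justifying Step 2 carefully, in particular the identity $H\cap G^0=H^0$, and making sure the structure theorem applies over a non-perfect $k$. To handle this, we base change to $\bar k$ first and invoke the invariance of linear reductivity under field extension. This reduction means we never need to describe $G^0$ over $k$ itself.
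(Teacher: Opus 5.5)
Your proof is correct and follows the same route as the paper: $G_1\times_{G_0}G_2$ is a closed subgroup of $G_1\times_k G_2$, and finite linearly reductive groups are stable under products and closed subgroups; the paper simply quotes these two facts from \cite[Propositions 2.6 and 2.7]{AbrOV}, whereas you reprove them via the structure theorem over $\bar k$ (or via induction). One small slip: $(g_1,g_2)\mapsto\pi_1(g_1)\pi_2(g_2)^{-1}$ is not a homomorphism when $G_0$ is non-commutative, so use your second description (the preimage of the diagonal under $\pi_1\times\pi_2$, which is the paper's equalizer) to get the subgroup structure.
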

\begin{proof}
    $G_1 \times_k G_2$ is finite linearly reductive by \cite[Proposition 2.6]{AbrOV}. Furthermore, if $\phi_i: G_i \to G_0$, then $G_1 \times_{G_0} G_2$ is the equalizer of $\phi_i \circ p_i: G_1 \times_k G_2 \to G_0$, hence a (closed) subgroup scheme of $G_1 \times_k G_2$, hence finite and linearly reductive by \cite[Proposition 2.7]{AbrOV}. 
\end{proof}

\begin{prop}
    The category $FT(X)_{x_0}$ is a cofiltered category.
\end{prop}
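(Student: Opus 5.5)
To show that $\mathsf{FT}(X)_{x_0}$ is cofiltered, I will check the three standard axioms: the category is non-empty, any two objects are dominated by a common object, and any two parallel morphisms are equalized by some morphism into their source. Throughout, $X$ is assumed log regular with integral underlying scheme, as in Lemma \ref{prod}, so that lemma is available.

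\textbf{Non-emptiness.} The trivial torsor $(X, \Spec k, x_0)$ is an object: $T=X$, the group is trivial, and the point is $x_0$ itself. It is moreover terminal. Indeed, for any $(T,G,t_0)$ the structure map $T\to X$, together with $G\to \Spec k$, is a morphism of torsors, and it sends $t_0$ to $x_0$. It is the unique such morphism, since the only map to the trivial group is the trivial one and the map $T\to X$ is fixed.

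\textbf{Two objects.} Given $(T_1,G_1,t_1)$ and $(T_2,G_2,t_2)$, I take the two structure morphisms to the terminal object $(X,\Spec k,x_0)$. Lemma \ref{prod} then applies with $T_0=X$ and $G_0$ trivial. It shows that
\[(T_1\times_X T_2,\ G_1\times_k G_2,\ (t_1,t_2))\]
is a klf torsor pointed over $x_0$, and its group is linearly reductive by Lemma \ref{linred}. The two projections are morphisms in $\mathsf{FT}(X)_{x_0}$ to the two given objects. One should note that $(t_1,t_2)$ is a genuine $k$-log point of the fs fiber product; this holds by the universal property of the fs fiber product.

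\textbf{Equalizers.} This is the step I expect to be the main obstacle. Let
\[(f,\pi),(f',\pi')\colon (T_1,G_1,t_1)\rightrightarrows (T_0,G_0,t_0).\]
I form the fiber product of $(f,\pi)$ and $(f',\pi')$ over $(T_0,G_0,t_0)$ via Lemma \ref{prod}, and then pull back along the diagonal of $T_1$. Concretely, the candidate equalizer is
\[E=T_1\times_{(f,f'),\,T_0\times_X T_0,\,\Delta}T_0, \qquad K=G_1\times_{G_0\times G_0}G_0 .\]
Here $K$ is the equalizer subgroup of $\pi$ and $\pi'$, which is finite linearly reductive by the argument of Lemma \ref{linred}, and $E$ carries the point $t_1$ because $f(t_1)=t_0=f'(t_1)$.

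It remains to show that $E\to X$ is a klf $K$-torsor. I will follow the three steps of Lemma \ref{prod} verbatim:
\begin{enumerate}
\item Via the torsor isomorphism $T_0\times_X T_0\simeq G_0\times T_0$, the space $E$ is the preimage of the identity section under a map $T_1\to G_0$. The same open-and-closed argument as in Step 1 then gives surjectivity of $E\to X$, using that $X$ is connected and that $T_1\to X$ is both open and closed.
\item The restriction $E_U\to U$ is an fppf $K$-torsor, by the argument of Nori used in Step 2. By Theorem \ref{lin-red} it extends uniquely to a klf $K$-torsor $\widetilde E\to X$.
\item Finally, I identify $\widetilde E$ with $E$. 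I use the pushouts $\widetilde E\times^K G_1\simeq T_1$, obtained from the uniqueness of extensions, and the injectivity of $\mathrm{Hom}_X\to\mathrm{Hom}_U$ proved in Claim 2 of Lemma \ref{prod}. These identify $E$ with $\widetilde E\times^K K$.
\end{enumerate}
The inclusion $E\to T_1$ then equalizes $f$ and $f'$, which completes the verification of the third axiom.
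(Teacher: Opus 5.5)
Your proof is correct. It follows the paper's overall plan: verify the three cofilteredness axioms, with Lemma \ref{prod} doing the real work. Where you depart from the paper is the equalizer axiom, and your treatment there is the better one.

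For a parallel pair $f,g\colon T'\rightrightarrows T$, the paper forms $T'\times_{f,T,g}T'$ and observes that $f\circ p_1=g\circ p_2$. That is a commuting square, not a single arrow $h$ with $f\circ h=g\circ h$, so as written it does not verify the axiom. Your object $E=T_1\times_{(f,f'),\,T_0\times_X T_0,\,\Delta}T_0$, together with the inclusion $E\to T_1$, is exactly what cofilteredness requires.

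You also do not need to rerun the three steps of Lemma \ref{prod}. The triple $(T_0\times_X T_0,\,G_0\times G_0,\,(t_0,t_0))$ lies in $\mathsf{FT}(X)_{x_0}$ by your second axiom. The map $(f,f')$ is equivariant along $(\pi,\pi')$, and $\Delta$ is equivariant along the diagonal of $G_0$. Both respect the marked points, so both are morphisms in $\mathsf{FT}(X)_{x_0}$ into that product object. Lemma \ref{prod} applied to this pair then directly gives that $E$ is a klf torsor under $G_1\times_{G_0\times G_0}G_0=K$, pointed by $t_1$. In other words, binary products together with the fibre products of Lemma \ref{prod} yield equalizers formally.
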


\begin{proof}
  Indeed, we have that 
\begin{itemize}
\item $FT(X)_{x_0}$ is nonempty: $X \to X$ is a klf torsor.
\item  If $T_1, T_2$ are klf torsors, $T_1 \times_X T_2$ is a klf torsor too, and we have
morphisms $T_1 \times_X T_2 \to T_1, T_1 \times_X T_2 \to T_2$.
\item  Given two morphisms of klf-torsors $f,g: T' \to T$ over $X$, there exists by Lemma \ref{prod} a klf torsor $T' \times_T T'$ over $X$ such that the two compositions $f \circ p_1, g \circ p_2 : T' \times_T T' \to T' \to T$ are equal.
\end{itemize}
\end{proof}

From the previous proposition, we deduce:
\begin{theorem}\label{pi}
Let $k$ be a field of characteristic $p>0$ and let $X$ be a log regular $k$-log scheme whose
underlying scheme is integral. Let $x_0$ be a $k$-log point of $X$. Then there exists a profinite (and pro-linearly reductive) universal object
\[
(\widetilde{T}, \pi_1^{N,\log}(X,x_0), \widetilde{t}_0)
\]
in $PT(X)_{x_0}$ such that for every triple $(T,G,t_0)$ in $FT(X)_{x_0}$, there exists a unique morphism
\[
(\widetilde{T}, \pi_1^{N,\log}(X,x_0), \widetilde{t}_0)
\longrightarrow
(T,G,t_0)
\]
of objects in $PT(X)_{x_0}$. Moreover, there is a bijection between $k$-group scheme morphisms
\[
\pi_1^{N,\log}(X,x_0) \longrightarrow G,
\]
where $G$ is a finite linearly reductive $k$-group scheme, and objects of $FT(X)_{x_0}$. The torsor corresponding to a morphism $\varphi : \pi_1^{N,\log}(X,x_0) \to G$
is the contracted product $\widetilde{T} \times^{\pi_1^{N,\log}(X,x_0)} G$ via $\varphi$ (cofiltered limits commute with finite limits and colimits).
\end{theorem}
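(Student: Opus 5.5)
The universal object is the cofiltered limit of the objects of $\mathsf{FT}(X)_{x_0}$. This follows Nori's construction in the classical case, with the preceding Proposition (cofilteredness) and Lemma \ref{prod} replacing Nori's use of reducedness.

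\textbf{Step 1: a small indexing category.} Restrict to a small skeleton of $\mathsf{FT}(X)_{x_0}$. Torsors under finite $k$-group schemes have finite underlying morphisms by \cite[Theorem 2.1]{GillibertTame}. Hence, up to isomorphism, the objects form a set.

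\textbf{Step 2: uniqueness of morphisms.} Show that between two objects of $\mathsf{FT}(X)_{x_0}$ there is at most one morphism. Given $(f,g),(f',g')\colon (T,G,t_0)\to(T',G',t_0')$, apply Lemma \ref{prod} to the pair of morphisms. Since the fibre-product torsor surjects onto $X$, the equalizer of the two morphisms is itself an object of $\mathsf{FT}(X)_{x_0}$. An inclusion of pointed torsors that is a closed subtorsor for a subgroup of $G$ must then be all of $T$. Concretely, restrict to $U$, where the classical argument of \cite[Chapter II]{Nori} applies because $U$ is reduced and connected. Then use the injectivity $\mathrm{Hom}_X(Y_1,Y_2)\to\mathrm{Hom}_U(Y_{1,U},Y_{2,U})$ established in the proof of Lemma \ref{prod}. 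Together with the preceding Proposition, this makes the index category a cofiltered preorder.

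\textbf{Step 3: the limit.} Set $\widetilde{T}:=\varprojlim T$ and $\pi_1^{N,\log}(X,x_0):=\varprojlim G$, with $\widetilde t_0=(t_0)$ the compatible system of points. The transition maps are affine, since they are finite, so the limits exist as schemes. Each $G$ is finite linearly reductive, so the limit group is profinite and pro-linearly reductive. The torsor property passes to the limit, because cofiltered limits commute with the finite limits expressing $\widetilde T\times\pi_1\simeq\widetilde T\times_X\widetilde T$. Thus the triple lies in $\mathsf{PT}(X)_{x_0}$.

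\textbf{Step 4: universality and the bijection.} Every object $(T,G,t_0)$ receives the projection from the limit. Uniqueness comes from Step 2, since a morphism from the limit factors through some finite stage; this uses finite presentation of $T\to X$ and of $G$. For the bijection, send $\varphi\colon\pi_1^{N,\log}\to G$ to the contracted product $\widetilde T\times^{\pi_1}G$. This factors through a finite quotient $G_i$ as $T_i\times^{G_i}G$, which is a klf $G$-torsor pointed over $x_0$. Conversely, the universal morphism $\widetilde T\to T$ determines $\varphi$. The two assignments are mutually inverse by uniqueness, so morphisms $\pi_1^{N,\log}(X,x_0)\to G$ correspond exactly to objects of $\mathsf{FT}(X)_{x_0}$.

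The main obstacle is Step 2. It requires transporting Nori's rigidity argument to the log setting via restriction to $U$ and Theorem \ref{lin-red}, and checking that the image of $\varphi$ is linearly reductive. For the latter, use \cite[Proposition 2.7]{AbrOV}: quotients and subgroups of linearly reductive groups are linearly reductive.
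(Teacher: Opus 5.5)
\textbf{There is a genuine gap in Step 2.} You claim that $\mathsf{FT}(X)_{x_0}$ is a preorder, and your uniqueness argument in Step 4 relies on this. The claim is false.

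For a counterexample, take a prime $\ell\geq 3$ with $\ell\neq p$. Consider the trivial torsor $T=X\times_k\mathbb{Z}/\ell\mathbb{Z}$ pointed at $t_0=(x_0,0)$. Then $(\mathrm{id}_X\times[-1],[-1])$ is an endomorphism of $(T,\mathbb{Z}/\ell\mathbb{Z},t_0)$ in $\mathsf{FT}(X)_{x_0}$, and it differs from the identity.

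The faulty step is your sentence about closed subtorsors. The equalizer of $(f,g)$ and $(f',g')$ is a torsor under the equalizer of $g,g'$, namely $G\times_{G'\times G'}G'$. This subgroup can be proper: in the example it is trivial, and the equalizer is $X\times\{0\}$. Nori's Chapter II argument, as used in Step 2 of Lemma \ref{prod}, only shows that such a reduction of structure group surjects onto the base ("$Z=U$"). It does not show that the reduction is all of $T$, and a torsor can reduce to a proper subgroup.

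Your argument does prove something weaker: uniqueness of $f$ once $g=g'$ is fixed. In that case the equalizer is a sub-$G$-torsor of $T$ over $X$, hence equal to $T$. But this does not pin down the group-homomorphism component, which is what Step 4 needs.

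\textbf{How to repair it.} The fix is the standard one, and it is what the paper's one-line deduction from the cofilteredness Proposition amounts to. Drop Step 2 and do not pass to a preorder. Take $\widetilde T$ as the limit over the essentially small category $\mathsf{FT}(X)_{x_0}$ with all of its morphisms.

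Now let $\phi,\psi\colon\widetilde T\to T$ be two morphisms.
\begin{enumerate}
\item By finite presentation they factor through a common stage, as $\phi_i\circ p_i$ and $\psi_i\circ p_i$ with $\phi_i,\psi_i\colon T_i\to T$.
\item The equalizer axiom gives $u\colon T_j\to T_i$ in $\mathsf{FT}(X)_{x_0}$ with $\phi_i u=\psi_i u$.
\item Since $p_i=u\circ p_j$ for a limit cone, it follows that $\phi=\psi$.
\end{enumerate}
Equivalently, $\mathrm{Hom}(\widetilde T,T)=\varinjlim_i\mathrm{Hom}(T_i,T)$ is a singleton, because each $f\colon T_i\to T$ is identified with $\mathrm{id}_T$.

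From your Step 2 you only need one fact: the equalizer of two parallel morphisms is an object of $\mathsf{FT}(X)_{x_0}$. You obtain it from Lemma \ref{prod} applied to $(f,f')\colon T\to T'\times_X T'$ and the diagonal $T'\to T'\times_X T'$. This is in fact the correct form of the third bullet of the Proposition, which as written only produces the fibre product $T'\times_T T'$.

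With this change, your Steps 3 and 4, including the bijection via contracted products, go through as in the paper.
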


\begin{remark}
    If $T \to X$ is a klf torsor under a finite flat group scheme, its underlying scheme is finite by \cite[Theorem 2.1]{GillibertTame}. In particular, if $T_1 \to T_0$ is a morphism of klf torsors under finite flat group schemes, it is affine. Therefore, a projective limit of objects in $FT(X)_{x_0}$ exists as a scheme by \cite[IV3, Proposition 8.2.3]{EGA}. This implies  that $\pi_1^{N,log}(X,x_0)$ and $\widetilde{T}$ have underlying structures of schemes.
\end{remark}

\begin{corollary}\label{U}Assume that $X$ is a log regular log scheme with an integral underlying scheme and let $U$ denote the dense open of triviality. Let $x_0$ be a $k$-log point of $X$ with trivial log structure (i.e. $x_0 \in U$). We have an isomorphism 
\[\pi_1^{N,log}(X,x_0) \simeq \pi_1^N(U,x_0)\]
where $\pi_1^N$ is the maximal pro-linearly reductive Nori fundamental group on $U$.
\end{corollary}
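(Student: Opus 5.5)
The plan is to compare the two universal properties directly. Both sides should be limits of cofiltered categories of pointed torsors, and I will show that restriction to $U$ gives an equivalence between these categories. Let $\mathsf{FT}(U)_{x_0}$ be the category of triples $(T,G,t_0)$ with $G$ a finite linearly reductive $k$-group scheme, $T\to U$ an fppf $G$-torsor, and $t_0\in T(k)$ lying over $x_0$. The maximal pro-linearly reductive quotient $\pi_1^N(U,x_0)$ is the group of the universal object of this category, in the sense of Nori and \cite{Otabe}. Since $x_0\in U$ carries the trivial log structure, a $k$-log point of a klf torsor $T$ over $x_0$ is just a $k$-point of $T_U=T\times_X U$ over $x_0$: the log structure on the fibre over $x_0$ is trivial because $T_U\to U$ is strict. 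So restriction $T\mapsto T_U$ defines a functor
\[
R\colon \mathsf{FT}(X)_{x_0}\longrightarrow \mathsf{FT}(U)_{x_0}.
\]
Here I use that the fs fibre product with $U$ is the ordinary one, and that a klf torsor over a log scheme with trivial log structure is an fppf torsor; the latter follows from klf descent since $U$ has trivial log structure.

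\textbf{Step 1: $R$ is an equivalence.} By the remark after Lemma \ref{prod}, $G_X$ is finite flat linearly reductive over $X$, so Theorem \ref{lin-red} gives essential surjectivity on the underlying torsors. The pointing comes along because $t_0$ already lives over $U$. For full faithfulness, a morphism $(f,g)$ consists of a group homomorphism $g\colon G\to G'$ together with a $g$-equivariant map of torsors. Push $T$ out along $g$ to get the $G'$-torsor $T\times^G G'$; then $g$-equivariant maps $T\to T'$ correspond to morphisms of $G'$-torsors $T\times^G G'\to T'$, and Theorem \ref{lin-red} makes these bijective with the corresponding maps over $U$. Compatibility of the points is checked on $U$. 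Alternatively, injectivity follows from Claim 2 in the proof of Lemma \ref{prod}, and surjectivity from extending the $G'$-morphism.

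\textbf{Step 2: transport the universal object.} Write $\mathsf{FT}(X)_{x_0}=\varprojlim (T_i,G_i,t_i)$ for the cofiltered system from Theorem \ref{pi}, so that $\pi_1^{N,\log}(X,x_0)=\varprojlim G_i$. Applying $R$ gives a cofiltered system in $\mathsf{FT}(U)_{x_0}$, which is cofinal, indeed all of $\mathsf{FT}(U)_{x_0}$ up to equivalence, by Step 1. Its limit $(\widetilde T_U,\varprojlim G_i,\widetilde t_0)$ is therefore universal in the pro-category over $U$. Restriction commutes with the projective limit, since all transition maps are affine and base change to the open $U$ commutes with limits. Uniqueness of universal objects then yields $\pi_1^{N,\log}(X,x_0)\simeq\pi_1^N(U,x_0)$.

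\textbf{Main obstacle.} The delicate point is full faithfulness on morphisms whose group homomorphism is not the identity. The plan is to reduce to morphisms of torsors under a fixed group by pushout, as in Step 1. Care is also needed that fppf torsors on $U$ for $G$ over $k$ match fppf $G_U$-torsors, and that any pointed morphism over $U$ between restricted torsors extends. Both reduce to Theorem \ref{lin-red} together with the injectivity argument from Claim 2 of Lemma \ref{prod}.
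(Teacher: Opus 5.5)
Your proposal is correct and follows the paper's own route. The paper simply observes that $x_0\to X$ factors through $U$ and invokes the equivalence of Theorem \ref{lin-red}, which amounts to your restriction functor $R$ being an equivalence of pointed-torsor categories. You add details the paper leaves implicit: morphisms with a nontrivial group homomorphism via push-out, the pointing, and transporting the cofiltered limit to identify the universal objects.
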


\begin{proof} Note that if $x_0$ is endowed with the trivial log structure, then $x_0 \to X$ factors through $x_0 \to U$. The corollary follows from the equivalence of categories proved in Theorem \ref{lin-red}.
\end{proof}

\begin{remark}
    Similarly to the classical setting, $\pi_1^{N,log}(X,x_0) $ is a group scheme-theoretic analogue of the maximal prime-to-$p$ quotient of the log \'etale fundamental group. In particular, it follows from \cite[Example 4.7(c)]{illusie2002overview} that if $X$ is a regular integral scheme endowed with a log structure induced by a normal crossing divisor $D$, $U:=X \backslash D$ and $\widetilde{x_0}$ a log geometric point over a $k$-log point $x_0 \in D$, then there exists an isomorphism
    
\[\pi_1^{N,log}(X,x_0)(k) \simeq \pi_1^{log,\'et}(X,\widetilde{x_0})^{(p')} \simeq  \pi_1^{t}(U,\underline{x_0})^{(p')}\]
 where $\pi_1^{t}$ is the Grothendieck-Murre fundamental group \cite{tame-fundamental} and $\underline{x_0}$ is the underlying geometric point of $\widetilde{x_0}$.
\end{remark}

\bibliographystyle{alpha} %amsplain}
\bibliography{prebib}
\end{document}